\newcommand{\f}[1]{\mathbf{#1}}
\newcommand{\ab}[1]{\boldsymbol{#1}}
\def\bfm#1{\boldsymbol{#1}}
\newcommand{\bb}[1]{\bfm{#1}}
\newcommand{\R}{\mathbb R}
\newcommand{\V}{\mathcal{V}}
\newcommand{\W}{\mathcal{W}}
\newcommand{\LL}{i_0}
\newcommand{\RR}{i_1}
\newcommand{\g}{f}
\newcommand{\gC}{f}
\newcommand{\sm}{s}
\newcommand{\sS}{s}
\newcommand{\ot}{\theta}
\newcommand{\Side}{\tau}
\newcommand{\dd}{\partial}
\DeclareMathOperator{\Span}{span}
\newtheorem{thm}{Theorem}
\theoremstyle{definition}
\newtheorem{ex}{Example}
\newtheorem{rem}{Remark}
\newproof{pf}{proof}
\definecolor{gold}{rgb}{1,0.7,0}
\definecolor{dred}{rgb}{0.92,0,0}
\definecolor{dgreen}{rgb}{0,0.6,0}
\def\new{\color{blue}}
\begin{document}

\begin{frontmatter}

\title{A $C^\sS$-smooth mixed degree and regularity isogeometric spline space over planar multi-patch domains}

\cortext[cor]{Corresponding author}

\author[vil]{Mario Kapl}
\ead{m.kapl@fh-kaernten.at}

\author[slo1]{Alja\v z Kosma\v c}
\ead{aljaz.kosmac@iam.upr.si}

\author[slo1]{Vito Vitrih\corref{cor}}
\ead{vito.vitrih@upr.si}

\address[vil]{Department of Engineering $\&$ IT, Carinthia University of Applied Sciences, Villach, Austria}

\address[slo1]{IAM and FAMNIT, University of Primorska, Koper, Slovenia}

\begin{abstract}
We construct over a given bilinear multi-patch domain a novel $C^s$-smooth mixed degree and regularity isogeometric spline space, which possesses the degree $p=2s+1$ and regularity $r=s$ in a small neighborhood around the edges and vertices, and the degree~$\widetilde{p} \leq p$ with regularity $\widetilde{r} = \widetilde{p}-1 \geq r$ in all other parts of the domain. Our proposed approach 
relies on the technique~\cite{KaVi20b}, which requires for the $C^s$-smooth isogeometric spline space a degree 
at least $p=2s+1$ on the entire multi-patch domain. Similar to \cite{KaVi20b}, the $C^s$-smooth mixed degree and regularity spline space is generated as the span of 
basis functions that correspond to the 
individual
patches, edges and vertices of the 
domain. The 
reduction of degrees of freedom for the functions in the interior of the patches is achieved by introducing an appropriate mixed degree and regularity underlying spline space over
~$[0,1]^2$ to define the functions on the single patches. 
We further extend our construction 
with a few examples to the class of bilinear-like $G^s$ multi-patch parameterizations~\cite{KaVi17c, KaVi20b}, which enables the 
design of multi-patch domains 
having curved 
boundaries and
interfaces. Finally, the great potential of the $C^s$-smooth mixed degree and regularity isogeometric spline space for {performing} isogeometric analysis is demonstrated by several numerical examples of solving two particular high order partial differential equations, namely the biharmonic and triharmonic equation, via the isogeometric Galerkin method. 
\end{abstract}
\begin{keyword}
isogeometric analysis; 
Galerkin method; 
$C^s$-smoothness; mixed degree and regularity spline space; multi-patch domain
\MSC[2010] 65N30 \sep 65D17 \sep 68U07
\end{keyword}

\end{frontmatter}

\section{Introduction}

The basic idea of isogeometric analysis 
is to numercially solve 
a partial differential equation (PDE) by 
using the same spline 
space to describe the geometry of the physical domain and to represent the solution of the PDE, cf.~\cite{CottrellBook, HuCoBa04}. A common technique for describing complex domains, originated from computer aided geometric design, is the use of so-called multi-patch geometries with possibly extraordinary vertices, cf.~\cite{HoLa93, Fa97}. These are domains which are obtained by gluing several quadrilateral patches together and can possess vertices where less or more than four patches meet. To solve high-order PDEs over these multi-patch geometries, smooth spline functions are needed to represent the solution of the PDEs. Eg., in case of the Galerkin method, $C^1$-smooth 
isogeometric 
functions are required for solving 
$4$-th order PDEs, 
like the biharmonic equation, see e.g.~\cite{CoSaTa16, FaJuKaTa22, KaSaTa19b, KaBuBeJu16, NgKaPe15, WeTa22}, or the Kirchhoff-Love shell problem, see e.g.~\cite{FaVeKiKa23, Guo2015881, Pe15-2, kiendl-bazilevs-hsu-wuechner-bletzinger-10
}, and $C^2$-smooth 
functions for solving 
{$6$-th} order PDEs, 
{like} the triharmonic equation, see e.g.~\cite{KaVi17a, KaVi17b, TaDe14}. 

Most of the existing work for the 
{construction} of $C^{\sS}$-smooth ($\sS \geq 1$) isogeometric spline spaces over planar multi-patch 
{domains}, as studied in this paper, deals with the case~$\sS=1$, and can be 
{described by}
two possible strategies depending on whether the constructed functions are exactly $C^1$-smooth or just approximately $C^1$-smooth. In case of the approximate $C^1$-smoothness, the functions are obtained on the one hand indirectly by modifying the weak form of the problem such as for penalty methods, see e.g. \cite{HeJoPrWuKiHs2019, Leonetti2020
}, for Nitsche's methods, see e.g. \cite{
Guo2015881, Hu2018, Nguyen2014
} and for mortar methods, see e.g. \cite{Bouclier2017, Dornisch2015, Horger2019
}, or on the other hand directly by generating approximately $C^1$-smooth basis functions, see e.g.~\cite{SaJu21, TaTo22, WeTa22, WeTa21}. The concept of approximate smoothness for isogeometric spline functions over multi-patch geometries has been extended to 
{general} smoothness~$\sS \geq 1$ for mortar methods in~\cite{DiSchWoHe19, DiSchWoHe20}.

In case of the exact $C^1$-smoothness, the existing techniques can be divided into three possible approaches depending on the used type of parameterization for the underlying multi-patch geometry, namely first on a $C^1$-smooth multi-patch parameterization everywhere with singularities at the extraordinary vertices, see e.g.~\cite{NgPe16, ToSpHu17, WeLiQiHuZhCa22} and the related subdivision based approaches~\cite{Peters2, RiAuFe16, ZhSaCi18}, or with singularities at so-called scaling centers, see e.g.~\cite{ArReKlSi23, ReArSiKl23}, second on a $C^1$-smooth multi-patch parameterization with a $G^1$-smooth cap 
{around} 
extraordinary vertices, see e.g.~\cite{Pe15-2, KaPe17, KaPe18, NgKaPe15, WeFaLiWeCa23}, or third on a $G^1$-smooth multi-patch parameterization, which means 
{for a} planar multi-patch domain, just on a $C^0$-smooth multi-patch parameterization, see e.g.~\cite{BeMa14,BlMoVi17, BlMoXu20, ChAnRa18, ChAnRa19, KaBuBeJu16,KaSaTa21, mourrain2015geometrically}. The construction of exactly smooth isogeometric multi-patch spline spaces for a smoothness $\sS > 1$ has been studied for the first approach in \cite{ToSpHiHu16} for $s=2$ and for the third approach in \cite{KaVi17a,KaVi17b,KaVi17c,KaVi19a} for $s=2$ and in \cite{KaVi20b} for 
{general} $\sS \geq 1$. More details about generating $C^s$-smooth isogeometric spline functions over planar multi-patch geometries, in particular with the focus on the case $s=1$, can be found e.g. in the two survey articles~\cite{HuSaTaTo21, KaSaTa19b} as well as in the recent work~\cite{VeWeMaTaTo24}, where several techniques have been compared.

The third approach above for the 
{construction} of exactly $C^s$-smooth 
isogeometric multi-patch spline spaces is restricted for 
{a general} smoothness~$s$ to the technique~\cite{KaVi20b}, and employs {planar 
multi-patch domains with bilinear patch parameterizations} and their extension to the class of so-called {bilinear-like $G^{\sS}$ 
parameterizations}, cf.~\cite{KaVi17c, KaVi20b}. The class of bilinear-like $G^s$ multi-patch geometries is characterized by possessing the same linear gluing functions as bilinear multi-patch domains, coincides for the case $s=1$ with the class of analysis-suitable $G^1$ multi-patch geometries~\cite{CoSaTa16}, and 
{enables the construction} of multi-patch domains with {curved boundaries and patch interfaces}. A great benefit of the $C^s$-smooth isogeometric multi-patch spline space~\cite{KaVi20b} is that the spline space possesses optimal approximation properties as numerically shown 
in~\cite{KaVi20b} for $1 \leq \sS \leq 4$ by performing $L^2$ approximation, and in~\cite{KaKoVi24} for $s=4$ by solving the biharmonic equation via isogeometric collocation. However, a major drawback of the $C^{\sS}$-smooth spline space is that a quite high spline degree $p$ is needed, namely $p \geq 2\sS+1$. 

In this paper, we will present the construction of a novel $C^s$-smooth ($s \geq 1$) isogeometric spline space over planar bilinearly parameterized multi-patch geometries, and will also extend it {to 
bilinear-like $G^s$ 
parameterizations} on the basis of examples. Thereby, our proposed method will rely on the technique~\cite{KaVi20b}, and the generated $C^s$-smooth spline space will keep {the 
degree $p=2s+1$} and regularity $r=\sm$ just in a small neighborhood around the edges and vertices of the multi-patch domain, where it will be necessary, but will possibly reduce the degree and/or increase the regularity in all other parts of the domain, i.e. in the interior of each patch. 
This will be achieved by introducing an appropriate mixed degree and regularity underlying spline space over the unit square $[0,1]^2$ to define the functions on the single patches. The resulting $C^s$-smooth mixed degree and regularity isogeometric spline space will then allow to solve high-order PDEs with 
a much lower number of degrees of freedom compared to employing the spline space~\cite{KaVi20b} for the same level of refinement.

The idea of constructing exactly $C^s$-smooth isogeometric spline functions with a higher spline degree in the vicinity of the edges and/or vertices of a multi-patch geometry has been already studied for the second approach e.g. in \cite{Pe15-2,NgKaPe15,KaPe17,KaPe18}, and for the third approach, e.g in \cite{ChAnRa18, ChAnRa19}. However, all these methods are restricted to a smoothness $s=1$, and in case of the third approach the techniques~\cite{ChAnRa18, ChAnRa19} generate basis which can have a large support over one or more entire edges of the multi-patch domain. In contrast, our construction will work for an arbitrary smoothness $s\geq 1$, and all generated basis functions will possess a small local support. 

The potential of the constructed $C^s$-smooth mixed degree and regularity isogeometric spline space for solving high-order PDEs over planar multi-patch parameterizations will be demonstrated on the basis of several examples. 
In detail, we will use the isogeometric Galerkin method to solve the biharmonic equation with $C^1$-smooth functions, and the triharmonic equation with $C^2$-smooth functions. Thereby, we will consider two particular variants of the $C^s$-smooth mixed degree and regularity isogeometric spline space, namely on the one hand the $C^s$-smooth mixed degree spline space which possesses the same regularity $r=s$ everywhere and reduces the degree $p=2s+1$ in the interior of each patch to the smallest possible one, that is to $p=s+1$, and on the other hand the $C^s$-smooth mixed regularity spline space which has the same degree $p=2s+1$ everywhere and increases the regularity $r=s$ in the interior of each patch to the greatest possible one, that is to $r=2s$. While the first spline space, i.e. the $C^s$-smooth mixed degree spline space, will imply the lowest possible number of degrees of freedom, and will indicate optimal convergence rates with respect to the lower spline degree~$p=s+1$, the second spline space, i.e. the $C^s$-smooth mixed regularity spline space, will have a higher number of degrees of freedom but will indicate optimal convergence orders with respect to the greater spline degree $p=2s+1$. That means e.g. for the $C^s$-smooth mixed regularity spline space, that we will obtain with respect to $h$-refinement convergence rates of order $\mathcal{O}(h^{2s+2})$, $\mathcal{O}(h^{2s+1})$, $\mathcal{O}(h^{2s})$ for the $L^2$, $H^1$ and $H^2$-(semi)norms, respectively, and additionally $\mathcal{O}(h^{2s-1})$ for the $H^3$-seminorm in the case of the triharmonic equation.

The remainder of the paper is organized as follows. In Section~\ref{subsec:underlyingSpace}, we will present the mixed degree and regularity spline space on the unit square $[0,1]^2$, which will serve as the underlying spline space to define the $C^s$-smooth mixed degree and regularity isogeometric spline space over planar bilinearly parameterized multi-patch geometries, whose construction will be described in Section~\ref{sec:problem_statement}. Before the description of the $C^s$-smooth mixed degree and regularity isogeometric multi-patch spline space, we will also explain in Section~\ref{sec:problem_statement} the {setting of the} 
{considered} multi-patch 
{geometries} as well as the 
{basic principle}
of $C^s$-smooth isogeometric spline functions of mixed degree and regularity over {planar bilinear multi-patch 
domains}. 
Sections~\ref{sec:Galerkin} and~\ref{section_numerical_examples_Galerkin} 
will then be used to demonstrate the potential of the $C^s$-smooth mixed degree and regularity isogeometric spline space for solving high-order PDEs over 
{bilinear 
multi-patch geometries} including an extension {to 
bilinear-like $G^s$ multi-patch parameterizations}. More precisely, the biharmonic and triharmonic equations will be solved by 
{employing} the isogeometric Galerkin method 
{using} $C^1$ and $C^2$-smooth functions, respectively. Finally, 
{Section~\ref{sec:Conclusion} summarizes the main results of this study, and identifies issues that deserve further investigation}.

\section{A mixed degree and regularity spline space on the unit square $[0,1]^2$}  \label{subsec:underlyingSpace}

We will introduce a mixed degree and regularity spline space {on
~$[0,1]^2$} that will be used in Section~\ref{sec:problem_statement} as underlying spline space to construct a $C^\sS$-smooth ($\sS \geq 1$) isogeometric multi-patch spline space, and will show 
{in addition} some of the properties of the introduced mixed degree and regularity spline space. 


\subsection{Construction of the mixed degree and regularity spline space}

An underlying spline space to define a $C^{\sS}$-smooth 
isogeometric spline space usually has the same degree and regularity everywhere. While for the case of a one-patch domain, one can use an underlying spline space of bidegree $(p,p)$ with $p \geq \sS+1$ and regularity $r=\sS$, for the multi-patch case 
the degree~$p$ has to be at least $p=2\sS+1$, cf.~\cite{KaVi20b}. The goal of this section is 
to construct a particular mixed degree and regularity underlying spline space, which allows to define and to generate a mixed degree and regularity isogeometric multi-patch spline space which possesses the high degree~$p_2 = 2\sS +1$ with the low regularity $r_2=\sS$ just 
in a small neighborhood around the edges and vertices of the multi-patch domain, and can have a lower degree $p_1$ and a higher regularity $r_1$ with $\sS+1 \leq r_1 +1 \leq p_1 \leq 2\sS +1$ in all other parts of the domain. For this purpose, the introduced mixed degree and regularity underlying spline space {on
~$[0,1]^2$} will contain all those low degree $p_1$ and high regularity $r_1$ spline functions that have vanishing derivatives of order~$\ot \leq \sS$ at the boundary of $[0,1]^2$, and those high degree $p_2$ and low regularity $r_2$ spline functions whose support is contained only in the vicinity of the edges and vertices of $[0,1]^2$. Particular constructions of univariate mixed degree spline spaces and 
tensor-product extensions to the bivariate case have been already considered in \cite{ToSpHiHu16,ToSpHiMaHu2020,BeCaMo2017}. However, our construction will not be a simple tensor-product extension of a univariate mixed degree/regularity spline space but will remind more on a hierarchical construction as e.g.~in \cite{GiJuSp2012}.

We will denote by $\mathcal{S}_h^{p,r}([0,1])$ the spline space on the unit interval $[0,1]$ based on the knot vector \begin{equation*}  
(\underbrace{0,\ldots,0}_{(p+1)-\mbox{\scriptsize times}},
\underbrace{\textstyle \frac{1}{k+1},\ldots,\frac{1}{k+1}}_{(p-r) - \mbox{\scriptsize times}},\ldots, 
\underbrace{\textstyle \frac{k}{k+1},\ldots ,\frac{k}{k+1}}_{(p-r) - \mbox{\scriptsize times}},
\underbrace{1,\ldots,1}_{(p+1)-\mbox{\scriptsize times}}),
\end{equation*}
where $p$ and $r$ represent the degree and regularity of the space, respectively, while $h=\frac{1}{k+1}$ stands for the mesh size.
{Moreover,} let $\mathcal{S}_h^{\ab{p},\ab{r}}([0,1]^2)$ be the tensor-product spline space $\mathcal{S}_h^{p,r}([0,1])$ $\otimes 
\mathcal{S}_h^{p,r}([0,1])$ on the unit-square~$[0,1]^2$.
Additionally, we will refer to 
the B-splines of the spline spaces~$\mathcal{S}_h^{p,r}([0,1])$ and $\mathcal{S}_h^{\ab{p},\ab{r}}([0,1]^2)$
as $N_{j}^{p,r}$ and $N_{\ab{j}}^{\ab{p},\ab{r}} = N_{j_1,j_2}^{\ab{p},\ab{r}}=N_{j_1}^{p,r}N_{j_2}^{p,r}$, where $j,j_1,j_2=0,1,\ldots,n_{p,r}-1$, 
and $n_{p,r}= \dim \mathcal{S}_h^{p,r}([0,1]) = p+1+k(p-r)$,
respectively.


Although a more general definition of the mixed degree and regularity spline space would be possible, we will restrict ourselves 
to the practically most interesting case, namely to select degree $p_2$ and regularity $r_2$ as the lowest possible spline degree and regularity to fulfill $C^\sS$-smoothness conditions for the 
multi-patch case, i.e.
$
p_2 = 2\sS +1\; 
{\rm and} \; 
r_2 = \sS.
$
In this way, we will construct a particular underlying mixed degree and regularity spline space which will contain functions of degree $\ab{p}_2$ and regularity $\ab{r}_2$ in the vicinity of the vertices and edges of $[0,1]^2$, and functions of degree $\ab{p}_1$ and regularity $\ab{r}_1$, with 
$
\sS+1 \leq r_1 +1 \leq p_1 \leq 2\sS +1,
$
in the interior of $[0,1]^2$. 
Additionally, in order to reduce the number of degrees of freedom that are related to the functions in the interior of $[0,1]^2$, we will assume that $r_1=p_1-1$.  
Our mixed degree and regularity spline spaces will therefore satisfy throughout the paper the following assumptions:
\begin{equation}  \label{eq:boundsOnR1andP1}
p_2 = 2\sS +1, \quad r_2 = \sS, \quad r_1=p_1-1 \quad {\rm and} \quad  \sS+1 \leq p_1 \leq 2\sS +1.
\end{equation}

Let us first recall some relations between the univariate spline spaces $\mathcal{S}_h^{{p_1},{r_1}}([0,1])$ and $\mathcal{S}_h^{{p_2},{r_2}}([0,1])$. Since by \eqref{eq:boundsOnR1andP1} we have $p_1\leq p_2$ and $r_1 \geq r_2$, it follows that 
$$
\mathcal{S}_h^{{p_1},{r_1}}([0,1]) \subseteq \mathcal{S}_h^{{p_2},{r_2}}([0,1]),
$$ and we can represent any 
B-spline $N_i^{p_1,r_1}$ of the spline space $\mathcal{S}_h^{{p_1},{r_1}}([0,1])$ with respect to the 
B-splines $N_j^{p_2,r_2}$ of the spline space $\mathcal{S}_h^{{p_2},{r_2}}([0,1])$ as 
\begin{equation}  \label{eq:Bsplines_P1ToP2}
N_i^{p_1, r_1} = \sum_{j=0}^{n_{2}-1} \mu_j^{(i)} N_j^{p_2,r_2}, \quad i=0,1,\ldots, n_{1}-1, \; \mu_j^{(i)} \in \R,\, \mu_j^{(i)} \geq 0,
\end{equation}
where $n_i := n_{p_i,r_i}$.
By the partition of unity property for the B-splines 
of the spline spaces $\mathcal{S}_h^{p_i,r_i}([0,1])$, $i=1,2$, and by the linear independency of them, we have
$$
0 = 
\sum_{i=0}^{n_{1}-1} N_i^{p_1,r_1} - \sum_{j=0}^{n_{2}-1} N_j^{p_2,r_2} = 
\sum_{j=0}^{n_{2}-1} \left( \sum_{i=0}^{n_{1}-1} \mu_j^{(i)} -1 \right) N_j^{p_2,r_2},
$$
which implies
\begin{equation}  \label{eq:partitionMu}
   \sum_{i=0}^{n_{1}-1} \mu_j^{(i)} = 1, \quad j=0,\ldots,n_{2}-1. 
\end{equation} 
In order to generalize \eqref{eq:Bsplines_P1ToP2} and \eqref{eq:partitionMu} to the bivariate case let us define the 
two index sets
$$
\mathcal{J}_{i} = \{ \ab{j}= (j_1,j_2), \;\, j_1,j_2 = 0,1,\ldots, n_{i} -1 \}, \quad i=1,2.
$$
Then, we have
\begin{equation*}  \label{eq:Bsplines_P1ToP2_Bivariate}
N_{\ab{i}}^{\ab{p}_1,\ab{r}_1} = \sum_{\ab{j} \in \mathcal{J}_2} \mu_{\ab{j}}^{(\ab{i})}  N_{\ab{j}}^{\ab{p}_2,\ab{r}_2}, \quad \ab{i} \in \mathcal{J}_1, \quad \mu_{\ab{j}}^{(\ab{i})} := \mu_{j_1}^{(i_1)} \mu_{j_2}^{(i_2)} \in \R,\; \mu_{\ab{j}}^{(\ab{i})}>0,
\end{equation*}
and
\begin{equation}  \label{eq:partitionMu_Bivariate}
   \sum_{\ab{i} \in \mathcal{J}_1} \mu_{\ab{j}}^{(\ab{i})}  = 1, \quad \ab{j} \in \mathcal{J}_2.  
\end{equation}

%

We first 
introduce the mixed degree and regularity  spline space for the univariate case by explaining the construction of its basis. 
Recall \eqref{eq:boundsOnR1andP1}. 
We start with all 
B-splines $N_{i}^{{p}_1,{r}_1}$ from the space $\mathcal{S}_h^{{p}_1,{r}_1}([0,1])$, remove all of them that have a non-vanishing derivative of order $\ot \leq \sm$ at $0$ or $1$, and replace them with the 
B-splines $N_{i}^{{p}_2,{r}_2}$ from the space $\mathcal{S}_h^{{p}_2,{r}_2}([0,1])$ that have a non-vanishing derivative of order $\ot \leq \sm$ at $0$ or $1$.
In order to ensure the completeness of the resulting space we add some 
of the previously eliminated 
B-splines from the space $\mathcal{S}_h^{{p}_1,{r}_1}([0,1])$
after a truncation with respect to the space $\mathcal{S}_h^{{p}_2,{r}_2}([0,1])$ to guarantee that all derivatives of order $\ot \leq \sm$ 
vanish at $0$ and $1$.
Thus, the mixed degree and regularity spline space denoted by $\mathcal{S}_h^{({p}_1, {p}_2),{(r_1,r_2)}}([0,1]) 
$ is given by 
\begin{align*}  \label{eq:MixedSpace1D}
 \mathcal{S}_h^{({p}_1, {p}_2),{(r_1,r_2)}}([0,1]) & = \mathcal{S}_{1} ([0,1])  \oplus \mathcal{\overline{S}}_1 ([0,1]) \oplus \mathcal{S}_2 ([0,1])\\
 & = \Span \left\{ N_{i}^{{p}_1,{r_1}}, \; i = \sm+1,\ldots, n_{1}-\sm-2\right\}  \\
 & \oplus \Span \left\{ \overline{{N}}_{i}^{\,{p}_1,{r_1}}, \; i=1,\ldots,\sm,\, n_{1}-\sm-1, \ldots,n_{1}-2 \right\}\\
 &  \oplus \Span \left\{ N_{i}^{{p}_2,{r_2}}, \; i=0,\ldots,\sm,\, n_{2}-\sm-1, \ldots,n_{2}-1 \right\}, 
\end{align*}
where 
\begin{equation} \label{eq:truncation}
\overline{N}_{i}^{\,p_1,r_1} = 
\sum_{j=\sm+1}^{n_{2}-\sm-2} \mu_j^{(i)} N_j^{p_2,r_2}, \quad i=0,1,\ldots, n_{1}-1, \quad  \mu_j^{(i)} \geq 0,
\end{equation}
is a truncation of $N_i^{p_1,r_1}$ 
with respect to the space $\mathcal{S}_h^{{p}_2,{r_2}}([0,1])$ such that all derivatives of order $\ot \leq \sm$ 
vanish at $0$ and $1$, cf. Fig.~\ref{fig:SpacesMixed1D}.
Note that 
\begin{equation} \label{eq:truncationPrecise}
   \overline{N}_{i}^{\,p_1,r_1} = {N}_{i}^{p_1,r_1},  \;\; i=\sm+1,\ldots,n_{1}-\sm-2, 
   \quad {\rm and} \quad \overline{N}_{i}^{\,p_1,r_1} = 0, \;\; i = 0,n_{1}-1.
\end{equation}
\begin{figure}[h!]
        \centering
    \includegraphics[scale=0.33]{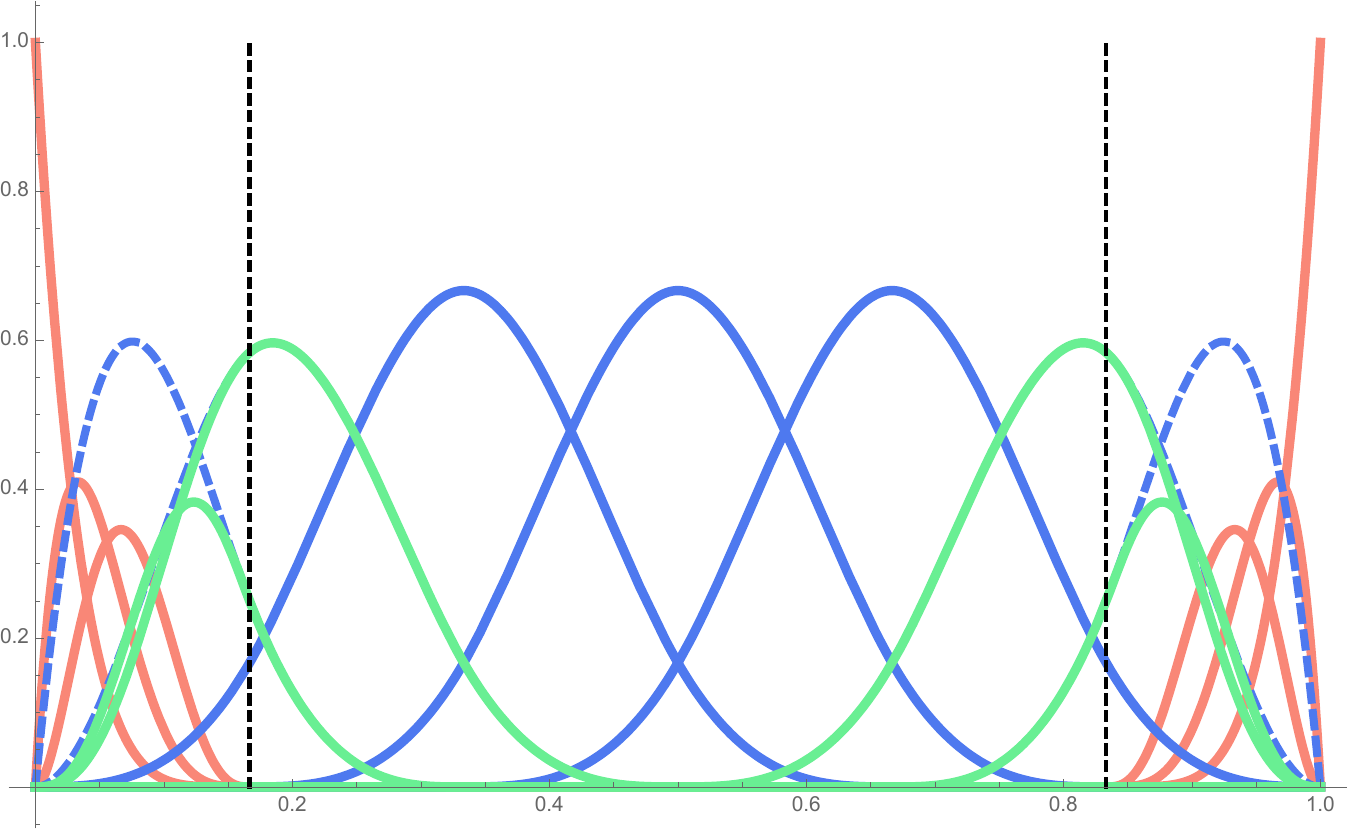}
    \hskip0.3em
    \includegraphics[scale=0.41]{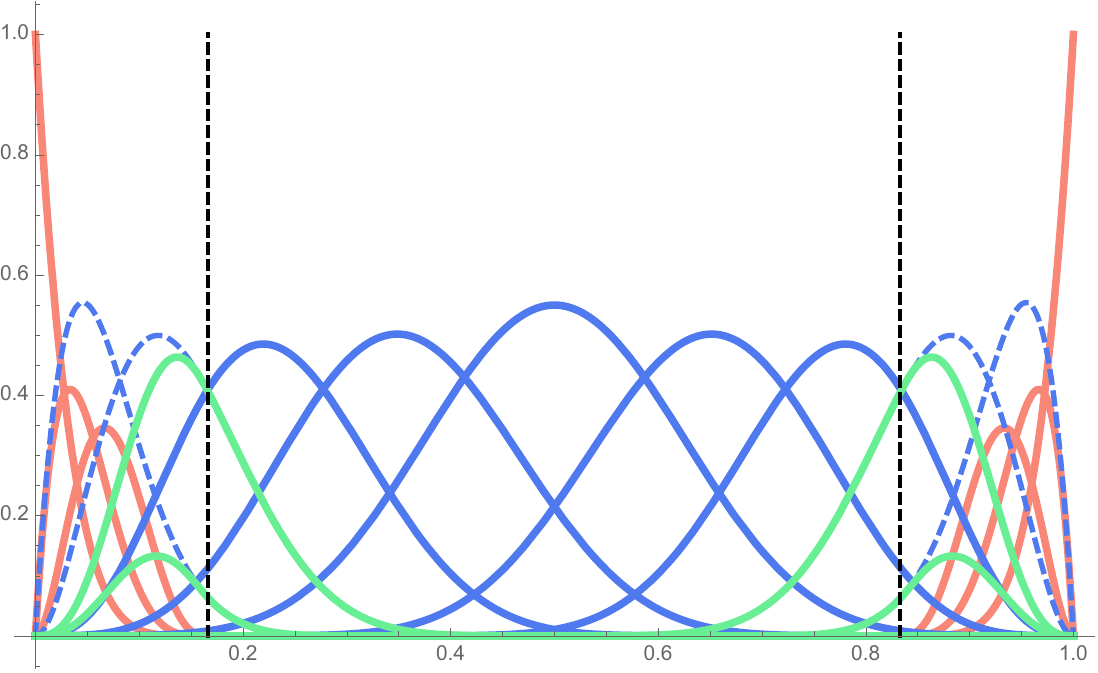}
    \caption{The basis functions from 
    the space $\mathcal{S}_h^{(p_1, p_2),{(r_1,r_2)}}([0,1])$, where $p_1=\sm+1$, $r_1=\sm$ (left), and $p_1=2\sm+1$, $r_1=2\sm$ (right), with $\sm=2$ and $k=5$. The blue and red functions are 
    B-splines from the spaces $\mathcal{S}_h^{{p}_1,{r_1}}([0,1])$ and $\mathcal{S}_h^{{p}_2,{r_2}}([0,1])$, respectively, while the green functions are the truncated ones. The dashed blue functions are the ones from $\mathcal{S}_h^{{p}_1,{r_1}}([0,1])$ before the truncation. The dashed gray vertical lines split the interval $[0,1]$ into $[0,h]$, $[h,1-h]$ and $[1-h,1]$. 
    }
    \label{fig:SpacesMixed1D}
\end{figure}
Since $n_1=p_1+1+k$, the subspace $\mathcal{S}_1([0,1])$ is nonempty if and only if
$$
k \geq 2(s+1)-p_1 \geq 1.
$$
Note that $k=1$ is only possible in the case $p_1=2\sm+1$ (which implies $r_1=2\sm$). 
The only B-spline of the space $\mathcal{S}_1([0,1])$ is in this case the B-spline $N_{\sm+1}^{2\sm+1,2\sm}$. However, this function is a linear combination of the B-splines from the space $\overline{\mathcal{S}}_1([0,1])$. More precisely, the following relation holds
$$
  N_{\sm+1}^{2\sm+1,2\sm} = \sum_{i=0}^{\sm-1} (-1)^i \left(  \overline{N}_{\sm-i}^{2\sm+1,2\sm} + \overline{N}_{\sm+2+i}^{2\sm+1,2\sm} \right) \in \overline{\mathcal{S}}_1([0,1]).
$$
Since we would like to avoid such situations, we will assume from now on that 
\begin{equation}  \label{eq:assumptionOnK}
  k \geq \max \{ 2(\sm+1)-p_1,2\},  \quad
  {\rm which}\; {\rm implies}\quad h\leq \min \left\{  \frac{1}{2(\sm+1)-p_1+1},\frac{1}{3}\right\}.
\end{equation}

Let us extend now the idea of the mixed degree and regularity spline space to the bivariate case, again by constructing its basis. 
Note that this will not be just a standard tensor-product extension. We start with the 
B-splines $N_{j_1,j_2}^{\ab{p}_1,\ab{r}_1}$ from the spline space $\mathcal{S}_h^{\ab{p}_1,\ab{r}_1}([0,1]^2)$ and skip all of them that have a non-vanishing derivative of order $\ot \leq \sm$ at the boundary $\partial([0,1]^2)$ and replace them with 
B-splines $N_{j_1,j_2}^{\ab{p}_2,\ab{r}_2}$ from the space $\mathcal{S}_h^{\ab{p}_2,\ab{r}_2}([0,1]^2)$ having a non-vanishing derivative of order $\ot \leq \sm$ at $\partial( [0,1]^2)$. To ensure the completeness of the resulting space we again have to add back
some of the 
B-splines from the space $\mathcal{S}_h^{\ab{p}_1,\ab{r}_1}([0,1]^2)$, but after first truncating them with respect to the space $\mathcal{S}_h^{\ab{p}_2,\ab{r}_2}([0,1]^2)$ in such a way that their derivatives of order $\ot \leq \sm$ vanish now at $\partial([0,1]^2)$. 
Consequently, we obtain a bivariate mixed degree and regularity spline space, denoted by $\mathcal{S}_h^{(\ab{p}_1, \ab{p}_2),(\ab{r}_1,\ab{r}_2)}([0,1]^2)$, via 
\begin{equation}  \label{eq:MixedSpace}
 \mathcal{S}_h^{(\ab{p}_1, \ab{p}_2),(\ab{r}_1,\ab{r}_2)}([0,1]^2) = \mathcal{S}_{1} ([0,1]^2)  \oplus \mathcal{\overline{S}}_1 ([0,1]^2) \oplus \mathcal{S}_2 ([0,1]^2), 
\end{equation}
with
\begin{align*}
& \mathcal{S}_{1} ([0,1]^2) =  \Span \left\{ 
N_{j_1,j_2}^{\ab{p}_1,\ab{r}_1}, \; j_1,j_2= \sm+1,\ldots, n_{1}-\sm-2\right\},\\[-0.7cm]
\end{align*}
\begin{align*}
& \mathcal{\overline{S}}_1 ([0,1]^2) = \Span \left\{ 
\overline{{N}}_{j_1,j_2}^{\,\ab{p}_1,\ab{r}_1}, \; j_1=1,\ldots,\sm, n_{1}-\sm-1, \ldots,n_{1}-2; \; j_2=1,\ldots,n_{1}-2 \right\} \oplus\\
& \qquad \qquad \Span \left\{ 
\overline{N}_{j_1,j_2}^{\,\ab{p}_1,\ab{r}_1}, \; j_1=\sm+1,\ldots,n_{1}-\sm-2; \; j_2=1,\ldots,\sm, n_{1}-\sm-1, \ldots,n_{1}-2 \right\}\\[-0.7cm]
\end{align*} 
and
\begin{align}
&\mathcal{S}_2 ([0,1]^2) = \Span \left\{ 
N_{j_1,j_2}^{\ab{p}_2,\ab{r}_2}, \; j_1=0,\ldots,\sm, n_{2}-\sm-1, \ldots,n_{2}-1; \; j_2=0,\ldots,n_{2}-1 \right\} \oplus \label{eq:space_s2} \\
 & \qquad \qquad \Span  \left\{ 
N_{j_1,j_2}^{\ab{p}_2,\ab{r}_2}, j_1=\sm+1,\ldots,n_{2}-\sm-2; \;  j_2 =  0,\ldots,\sm, n_{2}-\sm-1, \ldots,n_{2}-1 \right\}, \nonumber 
\end{align}
where $\overline{N}_{j_1,j_2}^{\,\ab{p}_1,\ab{r}_1} = \overline{N}_{j_1}^{\,{p}_1,{r}_1} \, \overline{N}_{j_2}^{\,{p}_1,{r}_1}$, and $\overline{N}_{i}^{\,p_1,r_1}$
is a truncation of ${N}_{i}^{p_1,r_1}$ defined in \eqref{eq:truncation}
, cf. Fig.~\ref{fig:SpacesMixed}.
Note that 
\begin{equation}  \label{eq:MixedSubSpacesRelations}
\mathcal{S}_1 ([0,1]^2)  \subseteq \mathcal{S}_h^{\ab{p}_1,\ab{r}_1}([0,1]^2) \quad {\rm and} \quad \overline{\mathcal{S}}_1 ([0,1]^2), \mathcal{S}_2 ([0,1]^2) \subseteq \mathcal{S}_h^{\ab{p}_2,\ab{r}_2}([0,1]^2),
\end{equation}
and that the underlying mixed degree and regularity spline space $\mathcal{S}_h^{(\ab{p}_1, \ab{p}_2),(\ab{r}_1,\ab{r}_2)}([0,1]^2)$ is uniquely determined by selecting the smoothness $\sm$ and degree $p_1$ via $\sm+1 \leq p_1 \leq 2\sm+1$ and by assumptions \eqref{eq:boundsOnR1andP1}.
\begin{figure}[htb!]
        \centering
    \includegraphics[scale=0.37]{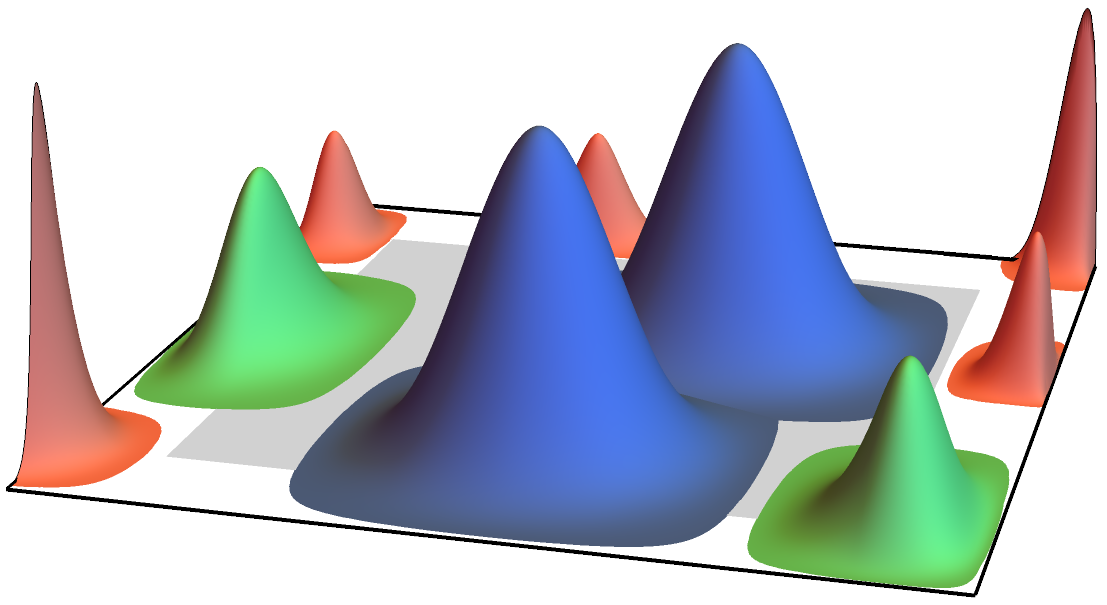}
    \hspace{1cm}
     \includegraphics[scale=0.37]{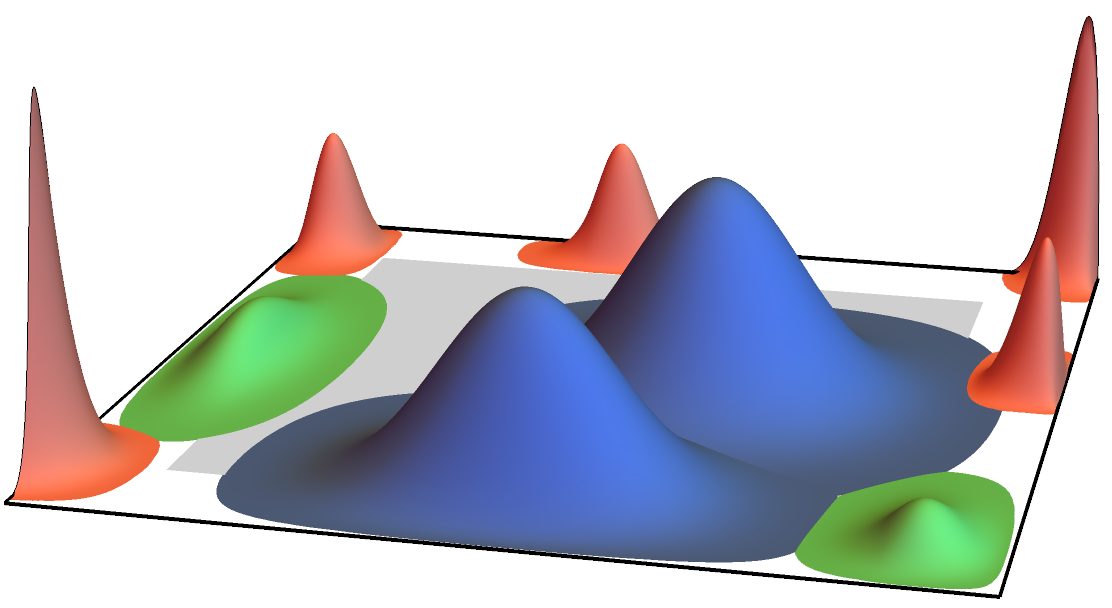}
    \vspace{0.4cm}
    
    \includegraphics[scale=0.4]{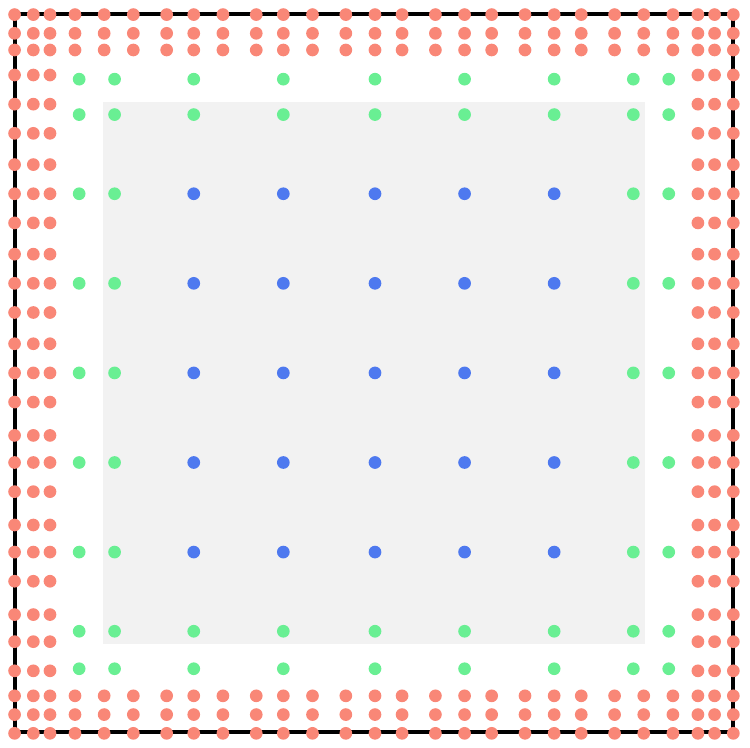}
    \hspace{2.9cm}
    \includegraphics[scale=0.4]{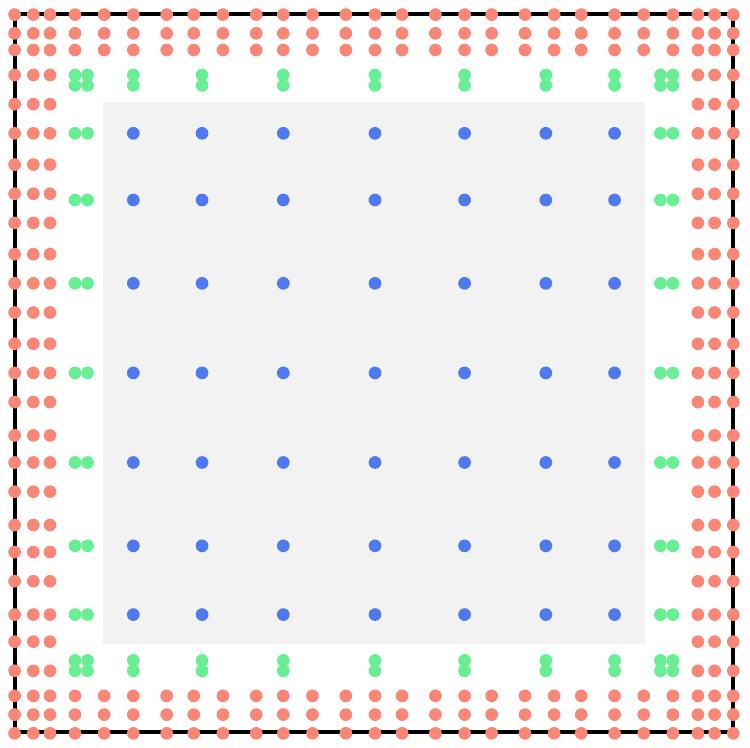}
    \caption{Instances of some basis functions (above) and the positions of extrema of all basis functions (below) of the spaces $\mathcal{S}_h^{(\ab{p}_1,\ab{p}_2),(\ab{r}_1,\ab{r}_2)}([0,1]^2)$ for $p_1=\sm+1, r_1=\sm$ (left) and $p_1=2\sm+1, r_1=2\sm$ (right) with $\sm=2$ and $k=7$. 
    The blue, green and red functions belong to the spaces $\mathcal{S}_1([0,1]^2)$, $\mathcal{\overline{S}}_1([0,1]^2)$ and $\mathcal{S}_2([0,1]^2)$, respectively. The gray region denotes the subdomain $[h,1-h]^2$.}
    \label{fig:SpacesMixed}
\end{figure}
Therefore, we get the two most practical and interesting cases by selecting the degree $p_1$ to be either the smallest or the greatest possible one:

\paragraph{Case A: $p_1=\sm+1$} Since by \eqref{eq:boundsOnR1andP1} we have $r_1=\sm$, and hence the regularities $r_1$ and $r_2$ are both equal to $\sm$, we can call the obtained underlying spline space also shortly just the {\em mixed degree spline space}, cf. Fig.~\ref{fig:SpacesMixed} (left). 

\paragraph{Case B: $p_1=2\sm+1$} Now the degrees $p_1$ and $p_2$ are both equal to $2\sm+1$, and therefore, we can call the obtained underlying spline space just the {\em mixed regularity spline space}, cf. Fig.~\ref{fig:SpacesMixed} (right).

\subsection{Properties of the mixed degree and regularity  spline spaces}

We will consider some properties of the obtained mixed degree and regularity spline spaces $\mathcal{S}_h^{(\ab{p}_1, \ab{p}_2),(\ab{r}_1, \ab{r}_2)}([0,1]^2)$. 

\begin{thm}
  Let the degrees ${p}_1,{p}_2$ and regularities ${r}_1,{r}_2$ be given as in \eqref{eq:boundsOnR1andP1}.
  All basis functions of the spaces $\mathcal{S}_{1} ([0,1]^2)$, $ \mathcal{\overline{S}}_1 ([0,1]^2)$ and $ \mathcal{S}_2 ([0,1]^2)$ are linearly independent and therefore form a basis 
  of the space $\mathcal{S}_h^{(\ab{p}_1, \ab{p}_2),(\ab{r}_1, \ab{r}_2)}([0,1]^2)$. Moreover, we have 
  $$
  \dim \mathcal{S}_h^{(\ab{p}_1, \ab{p}_2),(\ab{r}_1, \ab{r}_2)}([0,1]^2) = 4(\sm+1)^2(k+1) + (k+\sm)^2 + (p_1-(\sm+1))(p_1+2k+\sm-1).
  $$
\end{thm}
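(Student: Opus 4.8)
The plan is to split the statement into two essentially independent tasks: first the linear independence of the union of the three bases (which immediately yields that they form a basis of the direct sum), and then the dimension count, obtained by adding the cardinalities of the three index sets and simplifying algebraically. The whole independence argument rests on partitioning the univariate fine index set $\{0,\ldots,n_2-1\}$ into a \emph{boundary part} $B=\{0,\ldots,\sm\}\cup\{n_2-\sm-1,\ldots,n_2-1\}$ and an \emph{interior part} $I=\{\sm+1,\ldots,n_2-\sm-2\}$. By the standard behaviour of an open knot vector, $N_j^{p_2,r_2}$ has a non-vanishing derivative of some order $\ot\le\sm$ at $0$ or $1$ exactly when $j\in B$, and moreover $N_0^{p_2,r_2},\ldots,N_\sm^{p_2,r_2}$ are supported in $[0,h]$ and $N_{n_2-\sm-1}^{p_2,r_2},\ldots,N_{n_2-1}^{p_2,r_2}$ in $[1-h,1]$, the two strips being disjoint by \eqref{eq:assumptionOnK}. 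Comparing with \eqref{eq:space_s2} shows that $\mathcal{S}_2([0,1]^2)$ is spanned precisely by the B-splines $N_{\ab{j}}^{\ab{p}_2,\ab{r}_2}$ with $\ab{j}\notin I\times I$, while by \eqref{eq:truncation} and \eqref{eq:truncationPrecise} every basis function of $\mathcal{S}_1([0,1]^2)$ and $\overline{\mathcal{S}}_1([0,1]^2)$ lies in $V_{II}:=\Span\{N_{\ab{j}}^{\ab{p}_2,\ab{r}_2}:\ab{j}\in I\times I\}$. Since $V_{II}$ and $\mathcal{S}_2([0,1]^2)$ are spanned by complementary subsets of the single linearly independent B-spline basis of $\mathcal{S}_h^{\ab{p}_2,\ab{r}_2}([0,1]^2)$, they meet only in $\{0\}$; hence the three summands add directly, the basis of $\mathcal{S}_2([0,1]^2)$ is automatically independent, and it remains only to treat $\mathcal{S}_1([0,1]^2)\oplus\overline{\mathcal{S}}_1([0,1]^2)$.

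For that summand I would first note, using $\overline{N}_i^{\,p_1,r_1}=N_i^{p_1,r_1}$ for $i\in I_1:=\{\sm+1,\ldots,n_1-\sm-2\}$ from \eqref{eq:truncationPrecise} and writing $E_1:=\{1,\ldots,\sm\}\cup\{n_1-\sm-1,\ldots,n_1-2\}$, that the index sets of $\mathcal{S}_1$ and $\overline{\mathcal{S}}_1$ combine to the full, pairwise disjoint product
\begin{equation*}
(I_1\times I_1)\cup(E_1\times M_1)\cup(I_1\times E_1)=M_1\times M_1,\qquad M_1:=\{1,\ldots,n_1-2\}=E_1\cup I_1,
\end{equation*}
so the joint basis is exactly $\{\overline{N}_{j_1}^{\,p_1,r_1}\overline{N}_{j_2}^{\,p_1,r_1}:j_1,j_2\in M_1\}$. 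Being a tensor product, this set is linearly independent as soon as the univariate family $\{\overline{N}_{i}^{\,p_1,r_1}:i\in M_1\}$ is, which is the technical heart of the proof. To establish the latter I would use that by \eqref{eq:Bsplines_P1ToP2} and \eqref{eq:truncation} one has $\overline{N}_i^{\,p_1,r_1}=N_i^{p_1,r_1}-\sum_{j\in B}\mu_j^{(i)}N_j^{p_2,r_2}$, whose removed terms are supported in $[0,h]\cup[1-h,1]$; hence $\overline{N}_i^{\,p_1,r_1}$ and $N_i^{p_1,r_1}$ coincide on the open middle strip $(h,1-h)$. Any relation $\sum_{i\in M_1}c_i\overline{N}_i^{\,p_1,r_1}=0$ therefore restricts to $\sum_{i\in M_1}c_iN_i^{p_1,r_1}=0$ on $(h,1-h)$, and since the B-splines $N_i^{p_1,r_1}$ not vanishing identically on $(h,1-h)$ are exactly those with $i\in M_1$ and B-splines are locally linearly independent, all $c_i$ vanish. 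I expect this univariate step to be the main obstacle: one must verify both that truncation preserves enough (agreement with $N_i^{p_1,r_1}$ on the interior strip, so nothing collapses for $i\in M_1$) and that $i=0,n_1-1$ are the only indices with $\overline{N}_i^{\,p_1,r_1}=0$, which is precisely what \eqref{eq:assumptionOnK} secures.

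It then remains to add cardinalities. From the index descriptions, $\dim\mathcal{S}_2([0,1]^2)=n_2^2-(n_2-2\sm-2)^2=4(\sm+1)(n_2-\sm-1)$ and $\dim\big(\mathcal{S}_1([0,1]^2)\oplus\overline{\mathcal{S}}_1([0,1]^2)\big)=|M_1\times M_1|=(n_1-2)^2$. Substituting $n_1=p_1+1+k$ and $n_2=(\sm+1)(k+2)$ yields
\begin{equation*}
\dim\mathcal{S}_h^{(\ab{p}_1,\ab{p}_2),(\ab{r}_1,\ab{r}_2)}([0,1]^2)=4(\sm+1)^2(k+1)+(k+p_1-1)^2,
\end{equation*}
and a direct expansion with $a:=p_1-\sm-1\ge0$ gives $(k+p_1-1)^2=(k+\sm+a)^2=(k+\sm)^2+a\,(a+2\sm+2k)=(k+\sm)^2+(p_1-(\sm+1))(p_1+2k+\sm-1)$, which is exactly the asserted formula. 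The only point requiring care in this last paragraph is keeping the index ranges in \eqref{eq:space_s2} and in $\overline{\mathcal{S}}_1([0,1]^2)$ consistent while counting; the rest is routine.
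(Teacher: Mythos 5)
Your proposal is correct and follows essentially the same route as the paper's proof: separate $\mathcal{S}_2([0,1]^2)$ from $\mathcal{S}_1([0,1]^2)\oplus\overline{\mathcal{S}}_1([0,1]^2)$ by the fact that the former lives only near $\partial([0,1]^2)$ while truncation is invisible on $[h,1-h]^2$, reduce the remaining independence to that of the coarse B-splines there, and then add the cardinalities of the index sets (your merged count $(n_1-2)^2$ agrees with the paper's separate sums $\dim\mathcal{S}_1+\dim\overline{\mathcal{S}}_1$). The only differences are cosmetic: you separate $\mathcal{S}_2$ via disjoint index subsets of the fine B-spline basis rather than via supports, and you spell out the univariate local-linear-independence step that the paper treats as "clear."
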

\begin{proof}
    Let us first prove the linear independency of the basis functions of the subspaces $\mathcal{S}_1([0,1]^2)$,  $\mathcal{\overline{S}}_1([0,1]^2)$ and $\mathcal{S}_2([0,1]^2)$. 
    Recall \eqref{eq:assumptionOnK}. 
    Restricted to the subdomain $[h,1-h]^2$, the basis functions from $\mathcal{\overline{S}}_1([0,1]^2)$ are actually functions from $\mathcal{S}_h^{\ab{p}_1,\ab{r}_1}([0,1]^2)$, since truncation only affects functions outside of the subdomain $[h,1-h]^2$. Therefore, the functions from $\mathcal{S}_1([0,1]^2)$ and $\mathcal{\overline{S}}_1([0,1]^2)$ are clearly linearly independent on the subdomain $[h,1-h]^2$ and consequently also on $[0,1]^2$. Additionally, the support of all functions from $\mathcal{S}_1([0,1]^2) \oplus\mathcal{\overline{S}}_1([0,1]^2)$ is at least partially contained in the subdomain $[h,1-h]^2$.
    Since on the other hand all functions from $\mathcal{S}_2([0,1]^2) $ have a vanishing support on the subdomain $[h,1-h]^2$ they are clearly linearly independent with functions from $\mathcal{S}_1([0,1]^2) \oplus\mathcal{\overline{S}}_1([0,1]^2)$. Also, they are linearly independent among them, since they are standard B-splines from $\mathcal{S}_h^{\ab{p}_2,\ab{r}_2}([0,1]^2)$.
    It remains to prove the dimension formula.
    It is straightforward to see that
    \begin{eqnarray*}
      \dim \mathcal{S}_1 ([0,1]^2)&=& (n_{1}-2(\sm+1))^2 = (p_1+1+k-2(\sm+1))^2,\\
      \dim \overline{\mathcal{S}}_1 ([0,1]^2)&=& 
      4 \sm (n_1-\sm-2) = 4 \sm (p_1+k -(\sm+1))\\
       \dim \mathcal{S}_2 ([0,1]^2)&=& 2 n_{2} (\sm+1) + 2(\sm+1)(n_{2}-2\sm-2) = 4(\sm+1)^2(k+1).
    \end{eqnarray*}
    Since $ \dim \mathcal{S}_h^{(\ab{p}_1, \ab{p}_2),(\ab{r}_1, \ab{r}_2)}([0,1]^2) = \dim \mathcal{S}_1([0,1]^2) + \dim \overline{\mathcal{S}}_1([0,1]^2) + \dim \mathcal{S}_2([0,1]^2) $, it follows
    \begin{align*}
    \dim \mathcal{S}_h^{(\ab{p}_1, \ab{p}_2),(\ab{r}_1, \ab{r}_2)}([0,1]^2) & \hspace{-0.1cm} = (p_1+1+k-2(\sm+1))^2 + 4 \sm (p_1+k -(\sm+1)) + 4 (\sm+1)^2(k+1) \\
    & \hspace{-0.1cm} = 4(\sm+1)^2(k+1) + (k+\sm)^2 + (p_1-(\sm+1))(p_1+2k+\sm-1).
    \end{align*}
\end{proof}
\begin{rem}
In the special cases, namely for {\em Case A} (i.e. where $p_1=\sm+1$) and {\em Case B} (i.e. where $p_1=2\sm +1$) the dimensions are
\begin{itemize}
\item {\em Case A}: $\dim \mathcal{S}_h^{(\ab{\sm}+\ab{1}, \ab{2\sm}+\ab{1}),(\ab{\sm}, \ab{\sm})}([0,1]^2) = 4(\sm+1)^2(k+1) + (k+\sm)^2.$  
\item {\em Case B}: $\dim \mathcal{S}_h^{(\ab{2\sm}+\ab{1}, \ab{2\sm}+\ab{1}),(\ab{2\sm}, \ab{\sm})}([0,1]^2) = 4(\sm+1)^2(k+1) + (k+\sm)^2 +s(3s+2k)$.
\end{itemize}
\end{rem}

\begin{thm}
   Let the degrees ${p}_1,{p}_2$ and regularities ${r}_1,{r}_2$ be given as in \eqref{eq:boundsOnR1andP1}. All basis functions of the space $\mathcal{S}_h^{(\ab{p}_1, \ab{p}_2),(\ab{r}_1, \ab{r}_2)}([0,1]^2)$ have a local support and form a convex partition of unity, i.e.~they are nonnegative and form a partition of unity. 
\end{thm}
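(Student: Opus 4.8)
The plan is to verify the three claimed properties — local support, nonnegativity, and the partition of unity — separately, handling the three families in \eqref{eq:MixedSpace} in turn and reducing the bivariate partition-of-unity statement to a univariate one wherever possible.

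Local support and nonnegativity will be immediate. The functions spanning $\mathcal{S}_1([0,1]^2)$ and $\mathcal{S}_2([0,1]^2)$ are ordinary tensor-product B-splines $N_{j_1,j_2}^{\ab{p}_1,\ab{r}_1}$ and $N_{j_1,j_2}^{\ab{p}_2,\ab{r}_2}$, so they are locally supported and nonnegative by the standard B-spline properties. For $\overline{\mathcal{S}}_1([0,1]^2)$ each basis function factors as $\overline{N}_{j_1}^{\,p_1,r_1}\overline{N}_{j_2}^{\,p_1,r_1}$, and by \eqref{eq:truncation} each univariate factor is a finite combination $\sum_j \mu_j^{(i)}N_j^{p_2,r_2}$ with $\mu_j^{(i)}\ge 0$; hence each factor is nonnegative and, being obtained from \eqref{eq:Bsplines_P1ToP2} by deleting terms, has support inside that of $N_i^{p_1,r_1}$. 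Thus the product is nonnegative with local support.

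The partition of unity is the substantial part, and the difficulty is that \eqref{eq:MixedSpace} is \emph{not} a plain tensor product, so one cannot merely multiply two univariate partitions of unity. I would first settle the univariate identity. Combining \eqref{eq:truncation} with the coefficient relation \eqref{eq:partitionMu} yields the key relation
$$
\sum_{i=0}^{n_1-1}\overline{N}_i^{\,p_1,r_1}=\sum_{j=\sm+1}^{n_2-\sm-2}N_j^{p_2,r_2}=:M,
$$
and using \eqref{eq:truncationPrecise} (namely $\overline{N}_i^{\,p_1,r_1}=N_i^{p_1,r_1}$ on the interior indices and $\overline{N}_0^{\,p_1,r_1}=\overline{N}_{n_1-1}^{\,p_1,r_1}=0$) this rewrites the sum of the univariate $\mathcal{S}_1$- and $\overline{\mathcal{S}}_1$-basis functions exactly as $M$, the sum of the interior degree-$p_2$ B-splines. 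Adding the boundary degree-$p_2$ B-splines spanning the univariate $\mathcal{S}_2$ completes this to $\sum_{j=0}^{n_2-1}N_j^{p_2,r_2}=1$.

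For the bivariate case I would introduce the univariate partial sums $U_1=\sum_{i=\sm+1}^{n_1-\sm-2}N_i^{p_1,r_1}$, the sum $U_{\overline{1}}$ of $\overline{N}_i^{\,p_1,r_1}$ over the truncated boundary indices $i\in\{1,\ldots,\sm,n_1-\sm-1,\ldots,n_1-2\}$, and $U_2$ the sum of $N_i^{p_2,r_2}$ over the boundary indices $i\in\{0,\ldots,\sm,n_2-\sm-1,\ldots,n_2-1\}$, so that the univariate step gives $U_1+U_{\overline{1}}=M$ and $M+U_2=1$. Since each of the five index families in \eqref{eq:MixedSpace} is a Cartesian product in $(j_1,j_2)$, the corresponding blocks of the total sum factor into products of such univariate sums: after using $\overline{N}=N$ on interior indices and appending the vanishing boundary terms, the $\mathcal{S}_1$- and $\overline{\mathcal{S}}_1$-blocks assemble into $(U_1+U_{\overline{1}})(x)(U_1+U_{\overline{1}})(y)=M(x)M(y)$, while the two $\mathcal{S}_2$-blocks give $U_2(x)\cdot 1+M(x)U_2(y)$. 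Summing,
$$
M(x)M(y)+U_2(x)+M(x)U_2(y)=M(x)\bigl(M(y)+U_2(y)\bigr)+U_2(x)=M(x)+U_2(x)=1.
$$
The main obstacle is precisely this bookkeeping: one must match the index ranges of the two $\overline{\mathcal{S}}_1$-families and the two $\mathcal{S}_2$-families so that the mixed cross-terms recombine into the clean product $M(x)M(y)$, which is where the identities \eqref{eq:truncationPrecise}, \eqref{eq:partitionMu} (and their bivariate form \eqref{eq:partitionMu_Bivariate}) do the real work.
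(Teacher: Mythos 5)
Your proposal is correct. The local support and nonnegativity arguments match the paper's (both reduce to the standard B-spline properties plus the fact that the truncation \eqref{eq:truncation} only deletes terms from the nonnegative expansion \eqref{eq:Bsplines_P1ToP2}). For the partition of unity, however, you take a genuinely different route. The paper works entirely in the bivariate setting: it starts from $1=\sum_{\ab{j}\in\mathcal{J}_1}N_{\ab{j}}^{\ab{p}_1,\ab{r}_1}$, rewrites everything in the degree-$\ab{p}_2$ basis, splits the index sets $\mathcal{J}_i$ into the parts $\widetilde{\mathcal{J}}_i$ indexing $\mathcal{S}_i([0,1]^2)$ and their complements, applies \eqref{eq:partitionMu_Bivariate} to collapse the $\widetilde{\mathcal{J}}_2$-block, and then recognizes the two remaining double sums as $\sum_{\ab{j}\in\widetilde{\mathcal{J}}_1}N_{\ab{j}}^{\ab{p}_1,\ab{r}_1}$ and $\sum_{\ab{j}\in\mathcal{J}_1\setminus\widetilde{\mathcal{J}}_1}\overline{N}_{\ab{j}}^{\,\ab{p}_1,\ab{r}_1}$. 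You instead first prove the univariate identity $\sum_i\overline{N}_i^{\,p_1,r_1}=M$ via \eqref{eq:partitionMu}, and then exploit the fact that each of the five index families in \eqref{eq:MixedSpace} is a Cartesian product, so the total sum factors block-by-block into $M(x)M(y)+U_2(x)+M(x)U_2(y)=1$. I checked your bookkeeping: blocks $\mathcal{S}_1$ and the two $\overline{\mathcal{S}}_1$-families sum to $U_1U_1+U_{\overline{1}}M+U_1U_{\overline{1}}=(U_1+U_{\overline{1}})(U_1+U_{\overline{1}})=MM$ (using $\overline{N}_i=N_i$ on interior indices and $\overline{N}_0=\overline{N}_{n_1-1}=0$ from \eqref{eq:truncationPrecise}), and the two $\mathcal{S}_2$-families give $U_2\cdot 1$ and $MU_2$, so the telescoping is valid. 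What your approach buys is that the asymmetric, non-tensor-product structure of \eqref{eq:MixedSpace} is handled by explicit index-range matching, which makes the cross-term cancellation transparent and easy to verify; the paper's version is shorter but requires the reader to confirm that the regrouped sums in its final line coincide exactly with the declared basis functions of the three subspaces. Both proofs rest on the same key coefficient identity $\sum_i\mu_j^{(i)}=1$.
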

\begin{proof} 
   We have to consider all basis functions forming the subspaces $\mathcal{S}_1([0,1]^2)$, $\mathcal{\overline{S}}_1([0,1]^2)$ and $\mathcal{S}_2([0,1]^2)$ defined in \eqref{eq:MixedSpace}.
   Clearly they have a local support, since (truncated) B-splines have a local support. 
   The nonnegativity follows directly by the nonnegativity of the B-splines and the definition of the truncation in \eqref{eq:truncation}. It remains to show that they form a partition of unity. 
   Let us define the two additional index sets
   \begin{align*}
     \widetilde{\mathcal{J}}_i = \{ \ab{j} \in \mathcal{J}_i, \; N_{\ab{j}}^{\ab{p}_i,\ab{r}_i} \in \mathcal{S}_i([0,1]^2) \} \subseteq \mathcal{J}_i, \quad i=1,2. 
   \end{align*}
   By starting with the partition of unity property for the B-splines of the space $\mathcal{S}_h^{\ab{p}_1,\ab{r}_1}([0,1]^2)$ and by using \eqref{eq:partitionMu_Bivariate}, we get
   \begin{align*}
   1 = & \sum_{\ab{j} \in \mathcal{J}_1} N_{\ab{j}}^{\ab{p}_1,\ab{r}_1} =  
   \sum_{\ab{j} \in \mathcal{J}_1} \sum_{\ab{\ell} \in \mathcal{J}_2} \mu_{\ab{\ell}}^{(\ab{j})} N_{\ab{\ell}}^{\ab{p}_2,\ab{r}_2} 
    =  \sum_{\ab{j} \in \mathcal{J}_1} \left( \sum_{\ab{\ell} \in \widetilde{\mathcal{J}}_2} \mu_{\ab{\ell}}^{(\ab{j})} N_{\ab{\ell}}^{\ab{p}_2,\ab{r}_2} + 
    \hspace{-0.15cm} \sum_{\ab{\ell} \in \mathcal{J}_2 \backslash \widetilde{\mathcal{J}}_2} \mu_{\ab{\ell}}^{(\ab{j})} N_{\ab{\ell}}^{\ab{p}_2,\ab{r}_2} \right)\\
    = & \sum_{\ab{\ell} \in \widetilde{\mathcal{J}}_2} \left(\sum_{\ab{j} \in \mathcal{J}_1}  \mu_{\ab{\ell}}^{(\ab{j})} \right) N_{\ab{\ell}}^{\ab{p}_2,\ab{r}_2} + 
    \sum_{\ab{j} \in \widetilde{\mathcal{J}}_1}  \sum_{\ab{\ell} \in \mathcal{J}_2\backslash \widetilde{\mathcal{J}}_2} \mu_{\ab{\ell}}^{(\ab{j})} N_{\ab{\ell}}^{\ab{p}_2,\ab{r}_2} + \hspace{-0.15cm} \sum_{\ab{j} \in \mathcal{J}_1 \backslash \widetilde{\mathcal{J}}_1}  \sum_{\;\ab{\ell} \in \mathcal{J}_2\backslash \widetilde{\mathcal{J}}_2} \mu_{\ab{\ell}}^{(\ab{j})} N_{\ab{\ell}}^{\ab{p}_2,\ab{r}_2} \\
    = & \sum_{\ab{\ell} \in \widetilde{\mathcal{J}}_2} N_{\ab{\ell}}^{\ab{p}_2,\ab{r}_2}  + \sum_{\ab{j} \in \widetilde{\mathcal{J}}_1}  \sum_{\;\ab{\ell} \in \mathcal{J}_2\backslash \widetilde{\mathcal{J}}_2} \mu_{\ab{\ell}}^{(\ab{j})} N_{\ab{\ell}}^{\ab{p}_2,\ab{r}_2} + \sum_{\ab{j} \in \mathcal{J}_1 \backslash \widetilde{\mathcal{J}}_1}  \sum_{\;\ab{\ell} \in \mathcal{J}_2\backslash \widetilde{\mathcal{J}}_2} \mu_{\ab{\ell}}^{(\ab{j})} N_{\ab{\ell}}^{\ab{p}_2,\ab{r}_2} \\
    = & 
     \sum_{\ab{j} \in \widetilde{\mathcal{J}}_2} N_{\ab{j}}^{\ab{p}_2,\ab{r}_2}  + \sum_{\ab{j} \in \widetilde{\mathcal{J}}_1} N_{\ab{j}}^{\ab{p}_1,\ab{r}_1} + \hspace{-0.15cm} \sum_{\ab{j} \in \mathcal{J}_1 \backslash\widetilde{\mathcal{J}}_1} \hspace{-0.15cm} \overline{N}_{\ab{j}}^{\,\ab{p}_1,\ab{r}_1}.
   \end{align*}
   By using \eqref{eq:truncation} and \eqref{eq:truncationPrecise}, the latter is exactly the sum of all functions from $\mathcal{S}_2([0,1]^2)$, $\mathcal{S}_1([0,1]^2)$ and $\overline{\mathcal{S}}_1([0,1]^2)$, and the proof is completed.

\end{proof}
\begin{thm}
    Let $\mathcal{S}_h^{(\ab{p}_1, \ab{p}_2),(\ab{r}_1, \ab{r}_2)}([0,1]^2)$ be defined as in \eqref{eq:MixedSpace} with the assumptions \eqref{eq:boundsOnR1andP1}. Then
    \begin{equation}  \label{eq:spaces}
        \mathcal{S}_h^{\ab{p}_1,\ab{r}_1}([0,1]^2) \subseteq \mathcal{S}_h^{(\ab{p}_1, \ab{p}_2),(\ab{r}_1, \ab{r}_2)}([0,1]^2) \subseteq \mathcal{S}_h^{\ab{p}_2,\ab{r}_2}([0,1]^2). 
    \end{equation}  
    Moreover, the space $\mathcal{S}_h^{(\ab{p}_1, \ab{p}_2),(\ab{r}_1, \ab{r}_2)}([0,1]^2)$ possesses the local reproduction property of polynomials of degree $\ab{p} \leq \ab{p}_1$.
\end{thm}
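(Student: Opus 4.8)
The plan is to establish the two inclusions separately and then deduce the reproduction property from the left one.

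For the right inclusion I would simply invoke the relations \eqref{eq:MixedSubSpacesRelations}: each of the three building blocks $\mathcal{S}_1([0,1]^2)$, $\overline{\mathcal{S}}_1([0,1]^2)$, $\mathcal{S}_2([0,1]^2)$ sits inside $\mathcal{S}_h^{\ab{p}_2,\ab{r}_2}([0,1]^2)$, using in addition that $\mathcal{S}_h^{\ab{p}_1,\ab{r}_1}([0,1]^2)\subseteq\mathcal{S}_h^{\ab{p}_2,\ab{r}_2}([0,1]^2)$, which is the tensor-product version of the univariate nesting recorded just above \eqref{eq:Bsplines_P1ToP2} (valid since $p_1\le p_2$ and $r_1\ge r_2$). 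As the mixed space is by \eqref{eq:MixedSpace} the direct sum of these three subspaces, the right inclusion is immediate.

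For the left inclusion, the heart of the statement, it suffices to show that every tensor-product B-spline $N_{\ab{j}}^{\ab{p}_1,\ab{r}_1}=N_{j_1}^{p_1,r_1}N_{j_2}^{p_1,r_1}$ lies in the mixed space, since these span $\mathcal{S}_h^{\ab{p}_1,\ab{r}_1}([0,1]^2)$. I would first settle the univariate claim: writing $B_i:=N_i^{p_1,r_1}-\overline{N}_i^{\,p_1,r_1}$ and combining \eqref{eq:Bsplines_P1ToP2}, \eqref{eq:truncation} and \eqref{eq:truncationPrecise}, one sees that $B_i$ is a combination of exactly those boundary B-splines $N_\ell^{p_2,r_2}$, $\ell\in\{0,\dots,\sm\}\cup\{n_2-\sm-1,\dots,n_2-1\}$, constituting $\mathcal{S}_2$, so $N_i^{p_1,r_1}=\overline{N}_i^{\,p_1,r_1}+B_i$ already lies in the univariate mixed space. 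The bivariate step is not a tensor product, so I would expand
\[
N_{j_1}^{p_1,r_1}N_{j_2}^{p_1,r_1}=\overline{N}_{j_1}^{\,p_1,r_1}\overline{N}_{j_2}^{\,p_1,r_1}+\overline{N}_{j_1}^{\,p_1,r_1}B_{j_2}+B_{j_1}\overline{N}_{j_2}^{\,p_1,r_1}+B_{j_1}B_{j_2}
\]
and check the four summands one at a time against \eqref{eq:MixedSpace}--\eqref{eq:space_s2}. The first term is $\overline{N}_{\ab{j}}^{\,\ab{p}_1,\ab{r}_1}$, which is either zero or a basis function of $\mathcal{S}_1\oplus\overline{\mathcal{S}}_1$; inserting \eqref{eq:truncation} for $\overline{N}^{\,p_1,r_1}$ and the boundary expansion of $B$, the remaining three become linear combinations of tensor-product B-splines $N_{\ell_1,\ell_2}^{\ab{p}_2,\ab{r}_2}$ in which at least one index is a boundary index, and these are precisely the functions spanning $\mathcal{S}_2$.

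The main obstacle I anticipate is exactly this bookkeeping in the bivariate step: for each mixed index pattern (interior$\times$edge, edge$\times$interior, edge$\times$edge, and the cases where $j_1$ or $j_2\in\{0,n_1-1\}$ force a vanishing factor $\overline{N}$) one must match the resulting indices $(\ell_1,\ell_2)$ against the two index blocks in \eqref{eq:space_s2} and the two blocks defining $\overline{\mathcal{S}}_1$, and verify no product falls outside the listed ranges. Once each summand is shown to lie in $\mathcal{S}_1$, $\overline{\mathcal{S}}_1$ or $\mathcal{S}_2$, membership in the mixed space follows. Finally, for the local reproduction property I would argue by nesting: the univariate space $\mathcal{S}_h^{p_1,r_1}([0,1])$ contains $\mathbb{P}_{p_1}$ since every polynomial of degree $\le p_1$ is trivially a $C^\infty$ piecewise polynomial of degree $p_1$, so $\mathcal{S}_h^{\ab{p}_1,\ab{r}_1}([0,1]^2)$ reproduces all polynomials of bidegree $\le\ab{p}_1$; by the left inclusion these polynomials belong to $\mathcal{S}_h^{(\ab{p}_1,\ab{p}_2),(\ab{r}_1,\ab{r}_2)}([0,1]^2)$, which gives the claimed (in particular elementwise) reproduction of degree $\ab{p}\le\ab{p}_1$.
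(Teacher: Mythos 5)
Your proof is correct and follows essentially the same route as the paper's: the right inclusion from \eqref{eq:MixedSubSpacesRelations}, the left inclusion by splitting each degree-$p_1$ B-spline into its truncation plus a remainder supported only on the boundary B-splines spanning $\mathcal{S}_2([0,1]^2)$ (the paper phrases this as ``re-truncating'' and treats the corner indices $j_1$ or $j_2\in\{0,n_{1}-1\}$ as a separate explicit case, whereas your $B_i$ decomposition handles all index patterns uniformly), and the reproduction property as a corollary of the left inclusion. The index bookkeeping you flag as the main obstacle does go through exactly as you describe, with the three remainder terms matching the two index blocks of \eqref{eq:space_s2}.
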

\begin{proof} 
    Let us start with the first relation in \eqref{eq:spaces}. Almost all functions from the space $\mathcal{S}_h^{\ab{p}_1,\ab{r}_1}([0,1]^2)\backslash $ 
    $\mathcal{S}_1([0,1]^2)$ can be represented with functions from  $\overline{\mathcal{S}}_1([0,1]^2)$ and $\mathcal{S}_2([0,1]^2)$ by re-truncating the functions from $\overline{\mathcal{S}}_1([0,1]^2)$. The only remaining functions are $N_{j_1,j_2}^{\ab{p}_1,\ab{r}_1}$ with $j_1 \in \{0, n_{1}-1\}$ or $j_2 \in \{0, n_{1}-1\}$. By symmetry it is enough to consider only the case $j_1=0$. By \eqref{eq:Bsplines_P1ToP2} one can see that 
    $$
    N_{0,j_2}^{\ab{p}_1,\ab{r}_1} = N_{0}^{{p}_1,{r}_1} N_{j_2}^{{p}_1,{r}_1} =
    \sum_{i=0}^\sm \mu_{i}^{(0)} N_{i}^{p_2,r_2} \sum_{j=0}^{n_{2}-1} {\mu}_{j}^{(j_2)} N_{j}^{p_2,r_2} = 
    \sum_{i=0}^\sm  \sum_{j=0}^{n_{2}-1} \mu_{i,j}^{(0,j_2)} N_{i,j}^{\ab{p}_2,\ab{r}_2}
    \in \mathcal{S}_2([0,1]^2).
    $$
    The second relation in \eqref{eq:spaces} follows directly from \eqref{eq:MixedSubSpacesRelations} and the fact that $\mathcal{S}_h^{\ab{p}_1,\ab{r}_1}([0,1]^2) \subseteq \mathcal{S}_h^{\ab{p}_2,\ab{r}_2}([0,1]^2)$.
    The local reproduction property of polynomials of degree $\ab{p} \leq \ab{p}_1$ is now a straightforward corollary of \eqref{eq:spaces}.
\end{proof}
When considering how close the mixed degree and regularity space $\mathcal{S}_h^{(\ab{p}_1, \ab{p}_2),(\ab{r}_1, \ab{r}_2)}([0,1]^2)$ is to both $\mathcal{S}_h^{\ab{p}_1, \ab{r}_1}([0,1]^2)$ and $\mathcal{S}_h^{\ab{p}_2, \ab{r}_2}([0,1]^2)$, we observe the following relations
$$
\lim_{h \to 0} \frac{\dim \mathcal{S}_h^{(\ab{p}_1, \ab{p}_2),(\ab{r}_1, \ab{r}_2)}([0,1]^2)}{\dim \mathcal{S}_h^{\ab{p}_1, \ab{r}_1}([0,1]^2)} = 1, \quad
\lim_{h \to 0} \frac{\dim \mathcal{S}_h^{\ab{p}_2, \ab{r}_2}([0,1]^2)}{\dim \mathcal{S}_h^{(\ab{p}_1, \ab{p}_2),(\ab{r}_1, \ab{r}_2)}([0,1]^2)} = \frac{(p_2-r_2)^2}{(p_1-r_1)^2}=(\sm+1)^2.
$$


\section{A $C^{\sS}$-smooth isogeometric multi-patch spline space of mixed degree and regularity} \label{sec:problem_statement}
 
We will present a novel $C^{\sS}$-smooth mixed degree and regularity isogeometric spline space over bilinearly parameterized planar multi-patch domains whose construction will be based on the underlying mixed degree and regularity spline space described in Section~\ref{subsec:underlyingSpace}, and will be motivated by the design of the $C^s$-smooth isogeometric spline space~\cite{KaVi20b} which possesses the same degree~$\ab{p}_2$ and inner patch regularity~$\ab{r}_2 \geq \ab{\sm}$ on the entire multi-patch domain and requires for the degree $p_2$ at least $p_2 = 2s+1$. The generated $C^{\sS}$-smooth isogeometric spline space of mixed degree~$(\ab{p}_1,\ab{p}_2)$ and mixed regularity~$(\ab{r}_1,\ab{r}_2)$ will keep the degree $p_2=2s+1$ and regularity $r_2=\sm$ 
in the vicinity of the edges and vertices and will 
have the degree $p_1$ and regularity $r_1=p_1-1$ in the interior of the patches of the multi-patch domain. 
Before presenting the construction of the $C^{\sS}$-smooth mixed degree and regularity isogeometric spline space we will first describe the configuration of the considered bilinear planar multi-patch parameterizations with their linear gluing functions and will further introduce 
$C^{\sS}$-smooth isogeometric spline functions (of mixed degree and regularity) over these multi-patch domains in a more general way.


 \subsection{The bilinearly parameterized multi-patch domain $\&$ the linear gluing functions} \label{subsec:multipatch}

Let $\Omega \subset \R^2$ be an open and connected planar domain, whose closure $\overline{\Omega}$ 
is a
disjoint union of open quadrilateral patches~$\Omega^{(i)}$, $i \in \mathcal{I}_{\Omega}$, of open edges~$\Gamma^{(i)}$, $i \in \mathcal{I}_{\Gamma}$, and of vertices~$\bfm{\Xi}^{(i)}$, $i \in \mathcal{I}_{\Xi}$, i.e.
\begin{equation*} 
\displaystyle
\overline{\Omega} = \bigcup_{i \in \mathcal{I}_{\Omega}} \Omega^{(i)}  \; {\cup}  \bigcup_{i \in \mathcal{I}_{\Gamma}} \Gamma^{(i)} \; {\cup} \bigcup_{i \in \mathcal{I}_{\Xi}} \bfm{\Xi}^{(i)},
\end{equation*}
cf. Fig~\ref{fig:multipatchCase}, 
where $\mathcal{I}_{\Omega}$, $\mathcal{I}_{\Gamma}$ and $\mathcal{I}_{\Xi}$ are the index sets of the indices of the patches~$\Omega^{(i)}$, edges~$\Gamma^{(i)}$ and vertices $\bfm{\Xi}^{(i)}$, respectively. Additionally we assume that the intersection of the closures of any two patches 
is either a single common edge, a single common vertex or is empty. Moreover, we divide $\mathcal{I}_{\Gamma}$ into $\mathcal{I}_{\Gamma}^I \dot{\cup} \mathcal{I}_{\Gamma}^B$ and $\mathcal{I}_{\Xi}$ into $ \mathcal{I}_{\Xi}^I \dot{\cup} \mathcal{I}_{\Xi}^B$, in order to separate the inner and boundary case of an edge and vertex.
The closure of each patch $\Omega^{(i)}$ is determined by four vertices $\bfm{\Xi}^{(i_j)}$, $j=1,2,3,4$, via a bilinear, bijective and regular geometry mapping~$\ab{F}^{(i)}: [0,1]^{2}  \rightarrow \R^{2}$, 
\begin{align*}
 \ab{F}^{(i)}(\bb{\xi}) = \ab{F}^{(i)}(\xi_1,\xi_2) = (1-\xi_1)(1-\xi_2) \bfm{\Xi}^{(i_1)} +
  \xi_1 (1-\xi_2) \bfm{\Xi}^{(i_2)} +
 (1-\xi_1) \xi_2 \,\bfm{\Xi}^{(i_3)} + 
  \xi_1 \xi_2 \,\bfm{\Xi}^{(i_4)}.
\end{align*}

\begin{figure}[bth]
    \centering
    \includegraphics[scale=0.25]{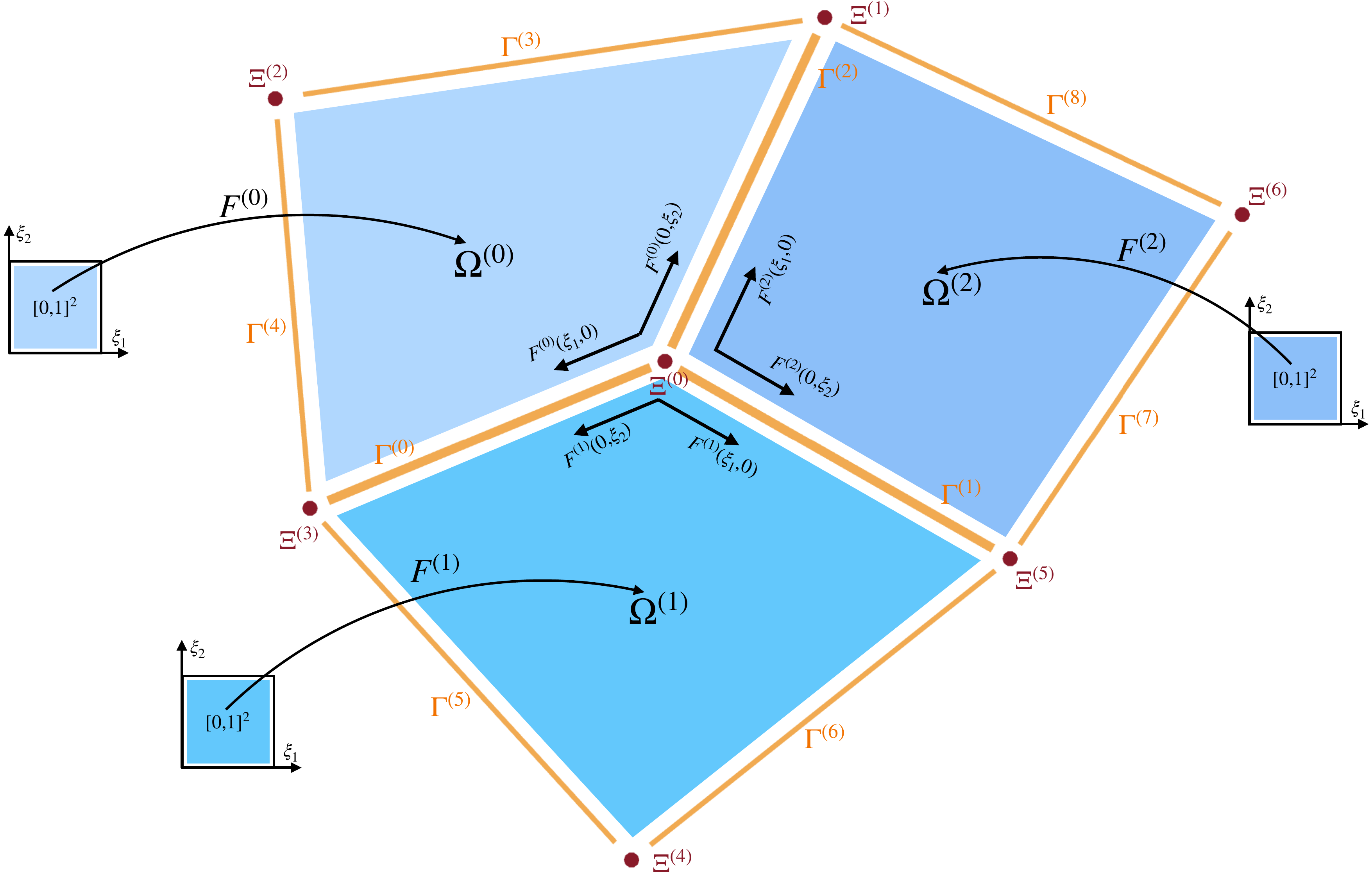}
    \caption{A particular example of domain $\overline{\Omega}$ with the open patches $\Omega^{(i)}$, defined by respective geometry mappings $\ab{F}^{(i)}$ (blue color), open edges $\Gamma^{(i)}$ (orange color) and the vertices $\bfm{\Xi}^{(i)}$ (red color).}
    \label{fig:multipatchCase}
\end{figure}
For each inner edge~$\Gamma^{(i)}$, $i \in \mathcal{I}_{\Gamma}^I$, where $\overline{\Gamma^{(i)}}=\overline{\Omega^{(i_0)}} \cap \overline{\Omega^{(i_1)}}$, $i_0,i_1 \in \mathcal{I}_{\Omega}$, we can define two linear polynomials $\alpha^{(i,\Side)},\beta^{(i,\Side)}, \Side \in \{\LL,\RR \}$, which are called gluing functions. 
We can assume that $\overline{\Gamma^{(i)}}$ is parameterized as 
$$
\ab{F}^{(i_0)}(0,\xi) = \ab{F}^{(i_1)}(0,\xi), \quad \xi \in [0,1],
$$
cf. Fig.~\ref{fig:twopatchCase}, which implies the gluing functions of the form
\begin{align}  \label{eq:alphaLRbar}
 \alpha^{(i,\Side)}(\xi) & = \lambda \det J \ab{F}^{(\Side)}(0,\xi), 
 \quad \lambda > 0, \nonumber\\
 &\\[-0.3cm]
 \beta^{(i,\Side)}(\xi) & = \frac{\partial_1 \ab{F}^{(\Side)}(0,\xi) \cdot \partial_2\ab{F}^{(\Side)}(0,\xi)}
 {||\partial_2 \ab{F}^{(\Side)}(0,\xi)||^{2}}{\new ,}\nonumber
\end{align}

where $J \ab{F}^{(\Side)}$, $\Side \in \{\LL,\RR \}$, are the Jacobian matrices of the geometry mappings $\ab{F}^{(\Side)}$. Since we supposed that the mappings $\ab{F}^{(\tau)}$, $\Side \in \{\LL,\RR \}$, are regular, it follows that $\alpha^{(i,i_0)} < 0$ and $\alpha^{(i,i_1)} > 0$. In the examples in Section~\ref{section_numerical_examples_Galerkin}, we select $\lambda$ in \eqref{eq:alphaLRbar} which minimizes
$
 || \alpha^{(i,\LL)}+1 ||^2_{L^2} + || \alpha^{(i,\RR)}-1 ||^2_{L^2},
$
cf. \cite{KaVi19a}.

\begin{figure}
    \centering
    \includegraphics[scale=0.25]{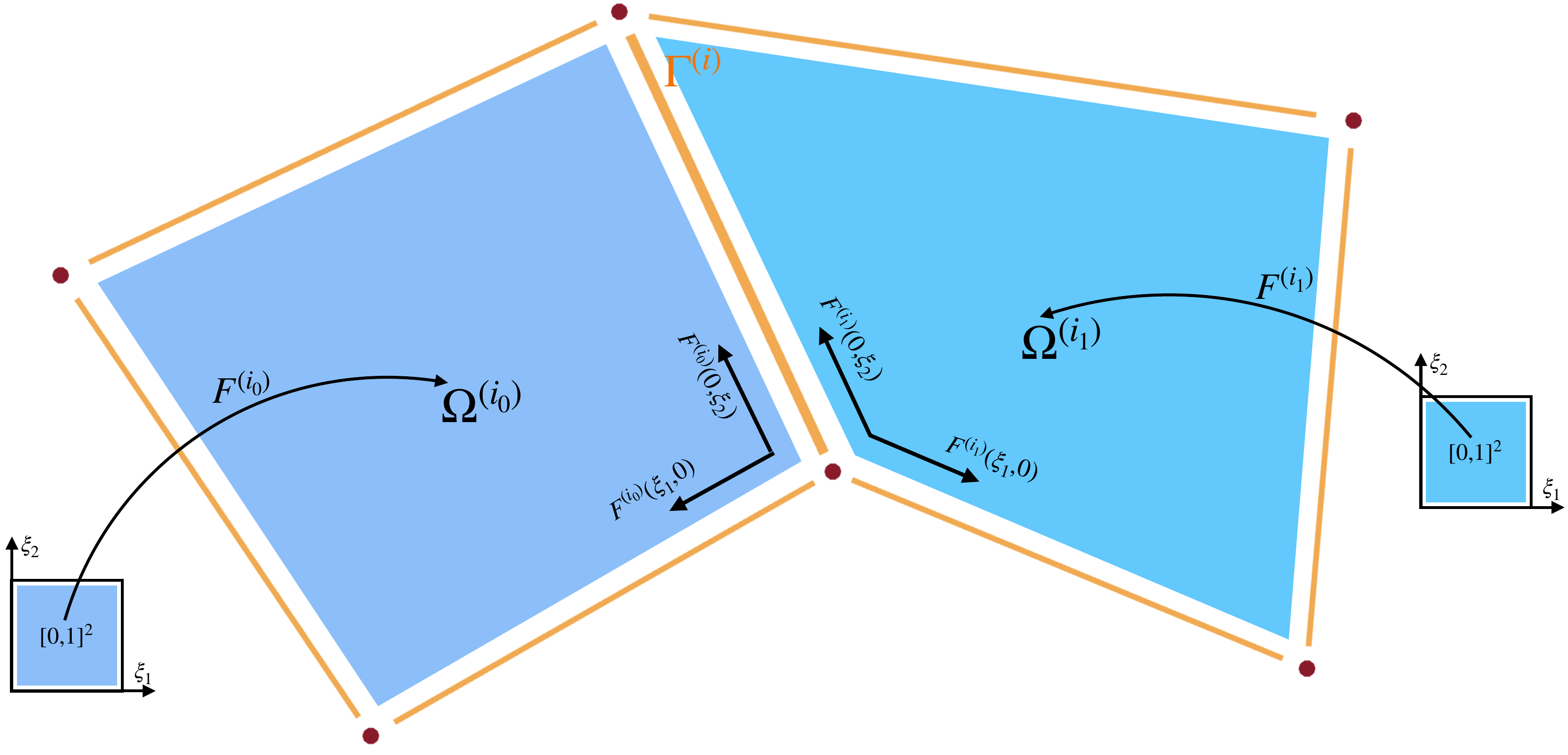}
    \caption{The parameterization of the two-patch domain $\overline{\Omega^{(i_0)} \cup \Omega^{(i_1)}}$ with the common edge $\Gamma^{(i)}$ and the 
    associated geometry mappings $\ab{F}^{(i_0)}$ and $\ab{F}^{(i_1)}$. }
    \label{fig:twopatchCase}
\end{figure}

\subsection{$C^{\sS}$-smooth isogeometric spline functions of mixed degree and regularity}


Before we introduce the space of $C^{\sS}$-smooth isogeometric spline functions of mixed degree~$(\ab{p}_1,\ab{p}_2)$ and mixed regularity~$(\ab{r}_1,\ab{r}_2)$ over bilinearly parameterized multi-patch domains, we first recall the 
{idea} of $C^{\sS}$-smooth isogeometric spline functions of the same degree~$\ab{p}_2$ and regularity $\ab{r}_2 = \ab{\sm}$ everywhere over bilinear multi-patch domains studied in~\cite{KaVi20b}. 

Let $\widetilde{\V}$ be the space of isogeometric spline functions of degree~$\ab{p}_2$ and regularity~$\ab{r}_2 = \ab{\sm}$ over the multi-patch domain~$\overline{\Omega}$, i.e.
\[
\widetilde{\V} = \left\{ \phi \in L^2(\overline{\Omega}): \; \phi \circ \ab{F}^{(i)}  \in {\mathcal{S}_{h}^{\ab{p}_2,\ab{r}_2}([0,1]^{2})}, \; 
i \in \mathcal{I}_{\Omega}   \right\} .
\]
For a function $\phi \in \widetilde{\V}$, we define for each inner edge~$\Gamma^{(i)}$, $i \in \mathcal{I}_{\Gamma}^I$, with $\overline{\Gamma^{(i)}}=\overline{\Omega^{(i_0)}} \cap \overline{\Omega^{(i_1)}}$, $i_0,i_1 \in \mathcal{I}_{\Omega}$, 
assuming that the parameterization of 
$\overline{\Gamma^{(i)}}$ is given as $\ab{F}^{(i_0)}(0,\xi) = \ab{F}^{(i_1)}(0,\xi)$, $\xi \in [0,1]$, cf. Fig.~\ref{fig:twopatchCase},
the functions
\begin{equation*}   \label{eq:gC2}
 \g_\ell^{(i,\Side)}(\xi) = \left(\alpha^{(i,\Side)}(\xi)\right)^{-\ell}\, \partial_1^\ell \g^{(\Side)}(0,\xi) - \sum_{j=0}^{\ell-1} {\ell \choose j} 
 \left(\frac{\beta^{(i,\Side)}(\xi)}{\alpha^{(i,\Side)}(\xi)}\right)^{\ell-j}  \dd^{\ell-j} \gC_j^{(i,\Side)}(\xi)
 \end{equation*}
for $\ell=0,\ldots, \sS$, and $\Side\in \{\LL,\RR\}$, where the functions $f^{(\Side)}$, $\Side \in \{\LL,\RR \}$, are the spline functions $f^{(\Side)}=\phi \circ \ab{F}^{(\Side)}$. Then, the space of $C^s$-smooth isogeometric spline functions of degree~$\ab{p}_2$ and regularity $\ab{r}_2$ over the multi-patch domain~$\overline{\Omega}$, which is given as
\begin{equation*}
\widetilde{\V}^{\sS} = \widetilde{\V} \cap C^{\sS}(\overline{\Omega}),
\end{equation*}
can be described as
\begin{equation} \label{eq:space_same_degree}
\widetilde{\V}^{\sS} = \left\{ \begin{array}{ll} \phi \in L^2(\overline{\Omega}): \; & \phi \circ \ab{F}^{(i)} \in {\mathcal{S}_h^{\ab{p}_2,\ab{r}_2}([0,1]^{2})},  \; 
i \in \mathcal{I}_{\Omega}, \mbox{ and } \\ & \g_\ell^{(i,\LL)}(\xi) = \g_\ell^{(i,\RR)}(\xi), \; \xi \in [0,1], \; \ell=0,\ldots, \sS,\; i \in \mathcal{I}_{\Gamma}^I \end{array}   \right\},
\end{equation}
cf.~\cite[Theorem 2]{KaVi20b}. 
For a function $ \phi \in \widetilde{\V}^{\sS}$, we denote for each inner edge~$\Gamma^{(i)}$, $i \in \mathcal{I}_{\Gamma}^I$, the equivalued terms
$\g_\ell^{(i,\LL)}(\xi) = \g_\ell^{(i,\RR)}(\xi)$, $\ell =0,\ldots,\sS$, by $\gC_{\ell}^{(i)}$, where the functions $\gC_{\ell}^{(i)}$ describe specific derivatives of order~$\ell$ of $\phi$ across the inner edge~$\Gamma^{(i)}$, cf.~\cite{KaVi20b}. In case of a boundary edge $\Gamma^{(i)}$, $i \in \mathcal{I}_{\Gamma}^{B}$, with $\overline{\Gamma^{(i)}} \subseteq \overline{\Omega^{(i_0)}}$, $i_0 \in \mathcal{I}_{\Omega}$, assuming that the boundary edge $\overline{\Gamma^{(i)}}$ is given by $\ab{F}^{(i_0)}(\{0 \} \times [0,1]) $, the functions $\gC_{\ell}^{(i)}$, $\ell =0,\ldots,\sS$, simply define the derivatives $\partial_1^{\ell} \left(\phi \circ \ab{F}^{(i_0)}(0,\xi)\right)$ of order $\ell$ across the boundary edge $\Gamma^{(i)}$ of a given function $ \phi \in \widetilde{\V}^{\sS}$.

We are now interested in the space of $C^s$-smooth isogeometric spline functions of mixed degree~$(\ab{p}_1,\ab{p}_2)$ and mixed regularity~$(\ab{r}_1,\ab{r}_2)$ over the multi-patch domain~$\overline{\Omega}$, which is given as
\[
\V^{\sS} = \V \cap C^{\sS}(\overline{\Omega})
\]
with
\begin{equation*}
\V = \left\{ \phi \in L^2(\overline{\Omega}): \; \phi \circ \ab{F}^{(i)} \in {\mathcal{S}_{h}^{(\ab{p}_1,\ab{p}_2),(\ab{r}_1,\ab{r}_2)}([0,1]^{2})}, \; 
i \in \mathcal{I}_{\Omega}   \right\}.
\end{equation*}
Due to $\mathcal{S}_{h}^{(\ab{p}_1,\ab{p}_2),(\ab{r}_1,\ab{r}_2)}([0,1]^2) \subseteq \mathcal{S}_h^{\ab{p}_2,\ab{r}_2}([0,1]^{2})$, and hence $\V \subseteq \widetilde{\V}$ and $\V^{\sS} \subseteq \widetilde{\V}^{\sS}$, the space~$\V^{\sS}$ can be represented similar to~\eqref{eq:space_same_degree} as
\begin{equation*} 
\V^{\sS} = \left\{ \begin{array}{ll} \phi \in L^2(\overline{\Omega}): \; & \phi \circ \ab{F}^{(i)} \in \mathcal{S}_{h}^{(\ab{p}_1,\ab{p}_2),(\ab{r}_1,\ab{r}_2)}([0,1]^2), \; 
i \in \mathcal{I}_{\Omega}, \mbox{ and } \\ & \g_\ell^{(i,\LL)}(\xi) = \g_\ell^{(i,\RR)}(\xi), \; \xi \in [0,1], \; \ell=0,\ldots, \sS,\; i \in \mathcal{I}_{\Gamma}^I \end{array}   \right\}.
\end{equation*}

\subsection{The $C^{\sS}$-smooth mixed degree and regularity isogeometric subspace} \label{subsec:Cs_space}

Inspired by the $C^{\sS}$-smooth isogeometric subspace
\begin{equation*}
\widetilde{\W}^{\sS} = \left\{ \phi \in \widetilde{\V}^{\sS}: \; \gC^{(i)}_\ell \in \mathcal{S}_h^{p_2-\ell,2\sm -\ell}([0,1]) , \; \ell=0,\ldots, \sS,\; i \in \mathcal{I}_{\Gamma}^I    \right\},
\end{equation*}
of the $C^{\sS}$-smooth isogeometric spline space~$\widetilde{\V}^{\sS}$ of degree~$\ab{p}_2$, introduced in~\cite{KaVi20b}, we generate instead of the entire $C^{\sS}$-smooth isogeometric spline space~$\V^{\sS}$ of mixed degree~$(\ab{p}_1,\ab{p}_2)$ and mixed regularity~$(\ab{r}_1,\ab{r}_2)$, which possesses a complex structure, 
the simpler $C^{\sS}$-smooth subspace
\begin{equation*}
\W^{\sS} = \left\{ \begin{array}{ll} \phi \in \V^{s}: \;  \gC^{(i)}_\ell \in \mathcal{S}_h^{p_2-\ell,2\sm -\ell}([0,1]) , \; \ell=0,\ldots, \sS,\; i \in \mathcal{I}_{\Gamma}^I \end{array}   \right\}.
\end{equation*}
Similar to $\widetilde{\W}^{\sS}$ in \cite{KaVi20b}, the $C^{\sS}$-smooth mixed degree and regularity subspace~$\W^{\sS}$ 
{can be constructed as the direct sum of smaller subspaces associated with the single patches~$\Omega^{(i)}$, $i \in \mathcal{I}_\Omega$, edges~$\Gamma^{(i)}$, $i \in \mathcal{I}_\Gamma$, and vertices $\bfm{\Xi}^{(i)}$, $i \in \mathcal{I}_\Xi$,}
namely as
\begin{equation} \label{eq:direct_sum_subspace}
\W^{\sS} = \left(\bigoplus_{i \in \mathcal{I}_\Omega} \mathcal{W}^\sS_{\Omega^{(i)}}\right) 
    \oplus \left(\bigoplus_{i \in \mathcal{I}_\Gamma} \mathcal{W}^\sS_{\Gamma^{(i)}}\right) 
    \oplus \left(\bigoplus_{i \in \mathcal{I}_\Xi} \mathcal{W}^\sS_{\bfm{\Xi}^{(i)}}\right) .
\end{equation}
In the following, we will present the derivation of the direct sum~\eqref{eq:direct_sum_subspace}, and will describe the design of the single subspaces.

Due to the direct sum~\eqref{eq:MixedSpace}, and the fact that for the $C^s$-smoothness conditions $\g_\ell^{(i,\LL)}(\xi) = \g_\ell^{(i,\RR)}(\xi), \; \xi \in [0,1], \; \ell=0,\ldots, \sS,\; i \in \mathcal{I}_{\Gamma}^I$, just basis functions of the spline space $\mathcal{S}_2 ([0,1]^2)$ are involved for the spline functions $f^{(\Side)}=\phi \circ \ab{F}^{(\Side)}$, $\Side \in \{\LL,\RR \}$, the space~$\W^{\sS}$ can be first decomposed into the direct sum
\[
\W^{\sS} = \W^{\sS}_{\Omega} \oplus \W^{\sS}_{\Gamma \cup \Xi},
\]
where 
\[
\W^{\sS}_{\Omega} =  \left\{ \phi \in L^2(\overline{\Omega}): \; \phi \circ \ab{F}^{(i)} \in {\mathcal{S}_{1} ([0,1]^2)  \oplus \mathcal{\overline{S}}_1 ([0,1]^2)} , \; 
i \in \mathcal{I}_{\Omega}   \right\}
\]
and
\begin{equation} \label{eq:WSGammaXi}
\W^{\sS}_{\Gamma \cup \Xi} =\left\{ \begin{array}{ll} \phi \in L^2(\overline{\Omega}): \; & \phi \circ \ab{F}^{(i)} \in {\mathcal{S}_{2} ([0,1]^2) }, \; 
i \in \mathcal{I}_{\Omega}, \mbox{ and } \\ 
& \g_\ell^{(i,\LL)}(\xi) = \g_\ell^{(i,\RR)}(\xi), \; \xi \in [0,1], \; \ell=0,\ldots, \sS,\; i \in \mathcal{I}_{\Gamma}^I, \mbox{ and } \\ 
&  \gC^{(i)}_\ell \in \mathcal{S}_h^{p_2-\ell,2\sm -\ell}([0,1]) , \; \ell=0,\ldots, \sS,\; i \in \mathcal{I}_{\Gamma}^I \end{array}   \right\}.
\end{equation}
The subspace $\W^{\sS}_{\Omega}$ can be further generated as the direct sum of the patch subspaces $\W^{\sS}_{\Omega^{(i)}}$, $i \in \mathcal{I}_{\Omega}$, i.e.
\[
\W^{\sS}_{\Omega} = \bigoplus_{i \in \mathcal{I}_{\Omega}} \W^{\sS}_{\Omega^{(i)}},
\]
with
\[
\W^{\sS}_{\Omega^{(i)}} =  \left\{ \phi \in L^2(\overline{\Omega}): \; \phi \circ \ab{F}^{(i)} \in {\mathcal{S}_{1} ([0,1]^2)  \oplus \mathcal{\overline{S}}_1 ([0,1]^2)} , \mbox{ and }  \phi \circ \ab{F}^{(j)} =0,  \; 
j \in \mathcal{I}_{\Omega} \setminus \{i \}    \right\}.
\]
Thereby, a possible basis of the subspace~$\W^{\sS}_{\Omega^{(i)}}$ is simply the set of those functions for which the corresponding function $\phi \circ \ab{F}^{(i)}$ is a basis function of the spline space $\mathcal{S}_{1} ([0,1]^2)  \oplus \mathcal{\overline{S}}_1 ([0,1]^2)$. The space~$\W^{\sS}_{\Gamma \cup \Xi}$ can be constructed as the direct sum of the edge subspaces $\mathcal{W}^\sS_{\Gamma^{(i)}}$, $i \in \mathcal{I}_{\Gamma}$, and of the vertex subspaces $\mathcal{W}^\sS_{\Xi^{(i)}}$, $i \in \mathcal{I}_{\Xi}$, i.e.
\[
\W^{\sS}_{\Gamma \cup \Xi} = \left(\bigoplus_{i \in \mathcal{I}_\Gamma} \mathcal{W}^\sS_{\Gamma^{(i)}}\right) 
    \oplus  
    \left(\bigoplus_{i \in \mathcal{I}_\Xi} \mathcal{W}^\sS_{\bfm{\Xi}^{(i)}}\right) .
\]
We first consider the case of the edge subspace~$\W^{\sS}_{\Gamma^{(i)}}$ for an edge~$\Gamma^{(i)}$, $i \in \mathcal{I}_{\Gamma}$, which is generated as
\begin{equation*}
\W^\sS_{\Gamma^{(i)}} = \left\{ \phi \in \W^{\sS}_{\Gamma \cup \Xi}: \;  \gC^{(i)}_\ell \in \mathcal{S}^{\ell}_{\Gamma^{(i)}}, \;
 \ell=0,\ldots, \sS, \mbox{ and } \gC^{(j)}_\ell =0 , \; \ell=0,\ldots, \sS,\; j \in \mathcal{I}_{\Gamma} \setminus \{ i\}    \right\},
\end{equation*}
where the spline spaces $\mathcal{S}^{\ell}_{\Gamma^{(i)}}$ are given as
\[
\mathcal{S}^{\ell}_{\Gamma^{(i)}} = \begin{cases}
                         \Span \left\{N_{j_2}^{p_2-\ell, 2\sm-\ell}| \; j_2=2\sm+1-\ell,
\ldots, n_{p_2-\ell, 2\sm-\ell}+ \ell -(2s+2) \right\} & \mbox{if }i \in \mathcal{I}_{\Gamma}^{I} \\
                         \Span \left\{ {N}_{j_2}^{\,p_2,r_2} |\; j_2=2\sm+1-\ell,
\ldots, n_2+ \ell -(2s+2) \right\} & \mbox{if }i \in \mathcal{I}_{\Gamma}^{B} 
                                \end{cases} .
\]
A possible basis of $\W^\sS_{\Gamma^{(i)}}$ is built by the set of those functions for which the corresponding functions $\gC^{(i)}_\ell$ are equal to a basis function of the spline space $\mathcal{S}^{\ell}_{\Gamma^{(i)}}$ for exactly one $\ell$, and equal to zero for all other $\ell$.

In case of the vertex subspace~$\W^{\sS}_{\bfm{\Xi}^{(i)}}$ for a vertex~$\bfm{\Xi}^{(i)}$, $i \in \mathcal{I}_{\Xi}$, with a patch valency~$\nu_i^{\Omega}$ and with an edge valency~$\nu_i^{\Gamma}$, assuming that all patches and edges around the vertex $\bfm{\Xi}^{(i)}$ are labeled in counterclockwise order as $\Omega^{(i_\rho)}$, $\rho=0,1,\ldots,v_i^{\Omega}-1$, and $\Gamma^{(i_\rho)}$, $\rho=0,1,\ldots,v_i^{\Gamma}-1$, respectively, and that the vertex~$\bfm{\Xi}^{(i)}$ is given as $\bfm{\Xi}^{(i)}=\ab{F}^{(i_0)}(0,0) = \ldots = \ab{F}^{(i_{\nu_i^{\Omega}-1})}(0,0)$, cf. Fig.~\ref{fig:multipatchCase}, the vertex subspace~$\W^{\sS}_{\bfm{\Xi}^{(i)}}$ is defined as
\begin{equation} \label{eq:WSXi}
\W^{\sS}_{\bfm{\Xi}^{(i)}} = \left\{ \begin{array}{ll} \phi \in \W^{\sS}_{\Gamma \cup \Xi}: \; & \gC^{(j)}_\ell \in \widetilde{\mathcal{S}}^{\ell}_{\Gamma^{(j)}}, \;
 \ell=0,\ldots, \sS, \; j \in \{i_0, \ldots, i_{\nu_i^\Gamma -1} \}, \mbox{ and } \\
 & \gC^{(j)}_\ell =0 , \; \ell=0,\ldots, \sS,\; j \in \mathcal{I}_{\Gamma} \setminus \{i_0, \ldots, i_{\nu_i^\Gamma -1} \}  \end{array} \right\},
\end{equation}
with the spline spaces
\[
\widetilde{\mathcal{S}}^{\ell}_{\Gamma^{(j)}} = \begin{cases}
                         \Span \left\{N_{j_2}^{p_2-\ell, 2\sm-\ell}| \; j_2=0, \ldots, 2\sm-\ell \right\} & \mbox{if }j \in \mathcal{I}_{\Gamma}^{I} \\
                         \Span \left\{ {N}_{j_2}^{\,p_2,r_2} |\; j_2=0, \ldots, 2\sm-\ell \right\} & \mbox{if }j \in \mathcal{I}_{\Gamma}^{B} 
                                \end{cases} .
\]
A possible basis of the space $\W^{\sS}_{\bfm{\Xi}^{(i)}}$ is given by a set of functions~$\phi$ for which the control points of the spline representations~$\phi \circ \ab{F}^{(i)} \in {\mathcal{S}_{2} ([0,1]^2) }$, $i \in \mathcal{I}_{\Omega}$, form a basis of the kernel of the homogeneous linear system for these control points which is determined by the stated conditions on the functions in the definition of the spaces $\W^{\sS}_{\Gamma \cup \Xi}$ and $\W^{\sS}_{\bfm{\Xi}^{(i)}}$ in~\eqref{eq:WSGammaXi} and \eqref{eq:WSXi}, respectively. In our examples in Section~\ref{section_numerical_examples_Galerkin} the basis of the kernel of the homogeneous linear system will be constructed by means of the concept of minimal determining sets, cf. \cite{KaVi17a,LaSch07}.   

Note that the resulting edge and vertex subspaces of the $C^s$-smooth mixed degree and regularity subspace~$\W^{\sS}$ coincide with the edge and vertex subspaces of the $C^\sS$-smooth space~$\widetilde{W}^{\sS}$ introduced in \cite{KaVi20b}. 
{This directly follows since}
the underlying spline space of the edge and vertex spaces for both $C^s$-smooth spaces $\W^{\sS}$ and $\widetilde{W}^{\sS}$ are the same, namely $\mathcal{S}_2 ([0,1]^2)$. For further details about the design of the edge and vertex subspaces of the $C^s$-smooth mixed degree and regularity subspace~$\W^{\sS}$, in particular about a possible basis construction for the single spaces, we hence refer to~\cite{KaVi20b}.

\section{The isogeometric Galerkin method} \label{sec:Galerkin}

We will present the isogeometric Galerkin method for solving the biharmonic equation with $C^1$-smooth and the triharmonic equation with $C^2$-smooth mixed degree and regularity isogeometric multi-patch spline functions over given planar multi-patch domains.  
\subsection{The isogeometric Galerkin method for 
the biharmonic equation} \label{subsec:Galerkin_biharmonic}

We first consider the isogeometric Galerkin approach for solving the biharmonic equation. 
Let $g:\overline{\Omega} \to \R$ 
and $g_1, g_2: \partial \Omega \to \R$ be sufficiently smooth functions. Then, we aim at 
computing $u:\overline{\Omega} \to \R$, 
which 
solves the biharmonic equation
\begin{align} \label{eq:biharmonic_problem_Galerkin}
 \triangle^{2} u(\ab{x})  & = g(\ab{x}), \,\,\quad  \ab{x} \in \overline{\Omega}, \nonumber \\[-0.25cm]
 & \\[-0.25cm]
u (\ab{x})  = g_1(\ab{x}) ,  \quad &
\partial_{\ab{n}} u (\ab{x})  = g_2(\ab{x}) ,   \quad \ab{x} \in \partial \Omega. \nonumber 
\end{align}
Using the weak formulation of \eqref{eq:biharmonic_problem_Galerkin}, we have to find 
$u = u_0 + u_g \in \mathcal{V}_{0} \oplus \mathcal{V}_{g}$, with 
\begin{align*}
 \mathcal{V}_{0} & = \{ v \in H^2(\overline{\Omega}) : 
 v(\ab{x}) = 0,\; 
\partial_{\ab{n}} v(\ab{x}) = 0 \mbox{ for }\ab{x} \in \partial \Omega \},\\
 \mathcal{V}_{g} & = \{ v \in H^2(\overline{\Omega}) : 
 v(\ab{x}) = g_1(x), \; 
\partial_{\ab{n}} v(\ab{x}) = g_2(x)  \mbox{ for }\ab{x} \in \partial \Omega \},
\end{align*}
such that for a fixed $u_g \in \mathcal{V}_{g}$ the equation
 \begin{equation} \label{eq:weak_biharmonic}
   \int_{\overline{\Omega}} \triangle u_0(\ab{x}) \triangle v_0(\ab{x}) \mathrm{d}\ab{x} = \int_{\overline{\Omega}} g(\ab{x}) v_0(\ab{x}) \mathrm{d}\ab{x} - 
   \int_{\overline{\Omega}} \triangle u_g(\ab{x}) \triangle v_0(\ab{x}) \mathrm{d}\ab{x},
 \end{equation}
 is satisfied  for all $v_0 \in \mathcal{V}_{0}$, cf. \cite{BaDe15, TaDe14, KaVi19a}. 
 To discretize the problem~\eqref{eq:weak_biharmonic} we employ the Galerkin projection and select as
 the finite dimensional discretization space the $C^1$-smooth mixed degree and regularity subspace $\mathcal{W}^1 \subseteq \mathcal{V}_{0} \oplus \mathcal{V}_{g}$ and denote the space $\mathcal{W}^1$ as $\mathcal{W}^1_h$ to emphasize the selected mesh size $h$. The subspace $\mathcal{W}^1_h$ is then first decomposed into $\mathcal{W}^1_{h,0} \oplus \mathcal{W}^1_{h,g}$,
 where
 $$ 
 \mathcal{W}^1_{h,0} = \{ v \in \mathcal{W}^1_h : 
 v(\ab{x}) = 0,\; 
\partial_{\ab{n}} v(\ab{x}) = 0 \mbox{ for }\ab{x} \in \partial \Omega \}
$$
 with a basis  $\{\phi_i\}_{i\in \mathcal{I}}$, $\mathcal{I} = \{0,1, \ldots, \dim \mathcal{W}_{h,0}^1-1 \}$. We now have to find the $C^1$-smooth approximation $u_{h} = u_{h,0} + u_{h,g}$, 
 where $u_{h,g}$ is first computed as the solution of the quadratic minimization problem 
 $$
 \min_{ u_{h,g} \in \mathcal{W}_{h,g}^1} \left( 
 \int_{\partial{\Omega}} ( u_{h,g}(\ab{x}) - g_1(\ab{x}))^2 \mathrm{d}\ab{x} + 
 \omega \int_{\partial{\Omega}} ( \partial_{\ab{n}} u_{h,g}(\ab{x}) - g_2(\ab{x}))^2 \mathrm{d}\ab{x} \right)
 $$
 for an appropriate 
 non-negative weight $\omega$, 
 while 
$ 
u_{h,0}(\ab{x})=\sum_{i \in \mathcal{I}}
 c_{i} \phi_i(\ab{x}) \in \mathcal{W}_{h,0}^1,  \,  c_{i} \in  \R,
$
is determined by solving the system of equations
\begin{equation*} 
  \int_{\overline{\Omega}}  \triangle u_{h,0}(\ab{x}) \triangle v_{h,0}(\ab{x}) \mathrm{d}\ab{x} = \int_{\overline{\Omega}} g(\ab{x}) v_{h,0}(\ab{x}) \mathrm{d}\ab{x} - 
  \int_{\overline{\Omega}}  \triangle u_{h,g}(\ab{x}) \triangle v_{h,0}(\ab{x}) \mathrm{d}\ab{x}
 \end{equation*}
for all $v_{h,0} \in  \W^1_{h,0}$. 
The obtained linear system $
S \ab{c}=\ab{g}
$, with $S = (s_{i,j} )_{i,j \in \mathcal{I}}$, $\ab{g} = (g_i)_{i \in \mathcal{I}}$, for the unknown coefficients $\ab{c} = (c_{i})_{i \in \mathcal{I}}$,
is given by
\begin{equation} \label{eq:sijB} 
 s_{i,j}=\int_{\overline{\Omega}} \triangle \phi_{i}(\ab{x}) \triangle \phi_{j}(\ab{x}) \mathrm{d}\ab{x} 
 \quad {\rm  and} \quad  
 g_{i}= \int_{\overline{\Omega}} g(\ab{x}) \phi_{i}(\ab{x}) 
 \;\mathrm{d}\ab{x}  - 
  \int_{\overline{\Omega}}  \triangle u_{h,g}(\ab{x}) \triangle \phi_{i}(\ab{x}) \mathrm{d}\ab{x}.
 \end{equation}
Employing the isogeometric formulation, we can compute the elements in \eqref{eq:sijB} 
by 
$
s_{i,j} = \sum_{\ell \in \mathcal{I}_{\Omega}} s^{(\ell)}_{i,j}
$
and
$
g_{i} = \sum_{\ell \in \mathcal{I}_{\Omega}} g^{(\ell)}_{i},
$
where
\begin{align*}  
    s^{(\ell)}_{i,j} & =  
    \int_{[0,1]^{2}}  \frac{1}{|\det J \ab{F}^{(\ell)}(\bb{\xi}^{})|}  \,
    \nabla \hspace{-0.08cm} \circ \hspace{-0.08cm} \left(  N^{(\ell)}(\bb{\xi}^{}) \, \nabla \left(\phi_i( \ab{F}^{(\ell)}(\bb{\xi}^{})) \right)\right) 
    \nabla \hspace{-0.08cm} \circ \hspace{-0.08cm} \left( N^{(\ell)}(\bb{\xi}^{}) \,\nabla \left(\phi_j( \ab{F}^{(\ell)}(\bb{\xi}^{}))\right)\right) 
  \mathrm{d}\bb{\xi}^{},\\
g^{(\ell)}_{i}  & = \int_{[0,1]^{2}} g(\ab{F}^{(\ell)}(\bb{\xi}^{})) \,\phi_i( \ab{F}^{(\ell)}(\bb{\xi}^{})) \,|\det J \ab{F}^{(\ell)}(\bb{\xi}^{})| \; 
\mathrm{d}\bb{\xi}^{}  \\
& - \int_{[0,1]^{2}}  \frac{1}{|\det J \ab{F}^{(\ell)}(\bb{\xi}^{})|}  \,
    \nabla \hspace{-0.08cm} \circ \hspace{-0.08cm} \left(  N^{(\ell)}(\bb{\xi}^{}) \, \nabla \left(u_{h,g}( \ab{F}^{(\ell)}(\bb{\xi}^{})) \right)\right) 
    \nabla \hspace{-0.08cm} \circ \hspace{-0.08cm} \left( N^{(\ell)}(\bb{\xi}^{}) \,\nabla \left(\phi_i( \ab{F}^{(\ell)}(\bb{\xi}^{}))\right)\right) 
  \mathrm{d}\bb{\xi}^{},
\end{align*}
with 
\begin{equation*}
N^{(\ell)}(\bb{\xi}) = \left|\det J \ab{F}^{(\ell)}(\bb{\xi})\right|  \left(J \ab{F}^{(\ell)}(\bb{\xi})\right)^{-T}  \left(J \ab{F}^{(\ell)}(\bb{\xi})\right)^{-1},
\end{equation*}
cf. \cite{BaDe15, KaVi17a}.

\subsection{The isogeometric Galerkin method for 
the triharmonic equation} \label{subsec:Galerkin_triharmonic}

Let us now also briefly study the isogeometric Galerkin method for 
the triharmonic equation.
Now, we have to compute $u:\overline{\Omega} \to \R$, 
which 
solves the triharmonic equation
\begin{align} \label{eq:triharmonic_problem}
 \triangle^{3} u(\ab{x})   & = - g(\ab{x}), \;\;  \ab{x} \in \overline{\Omega}, \nonumber \\[-0.25cm]
 & \\[-0.25cm]
u (\ab{x})  = g_1(\ab{x}) ,   \quad 
\partial_{\ab{n}} u (\ab{x}) & =  g_2(\ab{x}) ,  \quad 
\triangle u(\ab{x})  = g_3(\ab{x}) ,   \quad \ab{x} \in \partial \Omega, \nonumber
\end{align}
where $g, g_1, g_2$, $g_3$ are sufficiently smooth functions. 
The weak formulation of \eqref{eq:triharmonic_problem} requires to find $u = u_0 + u_g \in \mathcal{V}_{0} \oplus \mathcal{V}_{g}$, with 
\begin{align*}
 \mathcal{V}_{0}  & = \{ v \in H^3(\overline{\Omega}) : 
 v(\ab{x}) = 0,\; 
\partial_{\ab{n}} v(\ab{x}) = 0, \; \triangle v(\ab{x}) = 0 \mbox{ for }\ab{x} \in \partial \Omega \},\\
 \mathcal{V}_{g}  & = \{ v \in H^3(\overline{\Omega}) : 
 v(\ab{x}) = g_1(x), \; 
\partial_{\ab{n}} v(\ab{x}) = g_2(x), \;  \triangle v(\ab{x}) = g_3(x)  \mbox{ for }\ab{x} \in \partial \Omega \},
\end{align*}
such that for a fixed $u_g \in \mathcal{V}_{g}$ the equation
 \begin{equation*} \label{eq:weak_triharmonic}
   \int_{\overline{\Omega}} \nabla\left(\triangle u_0(\ab{x})\right) \circ \nabla \left( \triangle v_0(\ab{x}) \right) \mathrm{d}\ab{x} = \int_{\overline{\Omega}} g(\ab{x}) v_0(\ab{x}) \mathrm{d}\ab{x} - 
   \int_{\overline{\Omega}} \nabla\left(\triangle u_g(\ab{x})\right) \circ \nabla \left( \triangle v_0(\ab{x}) \right) \mathrm{d}\ab{x},
 \end{equation*}
 is satisfied  for all $v_0 \in \mathcal{V}_{0}$, cf. \cite{BaDe15, TaDe14, KaVi19a}. 
For the Galerkin projection we choose now the $C^2$-smooth mixed degree and regularity subspace $\mathcal{W}^2 \subseteq \mathcal{V}_{0} \oplus \mathcal{V}_{g}, $ 
and denote it again as $\mathcal{W}^2_h = \mathcal{W}^2_{h,0} \oplus \mathcal{W}^2_{h,g}$,
  with
$
\mathcal{W}^2_{h,0} = \mathcal{W}^2_h \cap \mathcal{V}_{0},
$
having a basis  $\{\phi_i\}_{i\in \mathcal{I}}$, $\mathcal{I} = \{0,1, \ldots, \dim \mathcal{W}_{h,0}^2-1 \}$.
 We have to find the $C^2$-smooth approximation $u_{h} = u_{h,0} + u_{h,g}$ by first computing
 the non-homogeneous part $u_{h,g} \in \mathcal{W}_{h,g}^2$ via minimizing the expression
 \begin{equation*}
 \int_{\partial{\Omega}} ( u_{h,g}(\ab{x}) - g_1(\ab{x}))^2 \mathrm{d}\ab{x} + 
 \omega_1 \int_{\partial{\Omega}} ( \partial_{\ab{n}} u_{h,g}(\ab{x}) - g_2(\ab{x}))^2 \mathrm{d}\ab{x} 
 + \, \omega_2 \int_{\partial{\Omega}} ( \triangle u_{h,g}(\ab{x}) - g_3(\ab{x}))^2 \mathrm{d}\ab{x} 
 \end{equation*}
 for suitable non-negative weights $\omega_1$, $\omega_2$, and then constructing the homogeneous part 
$ 
u_{h,0}(\ab{x})=\sum_{i \in \mathcal{I}}
 c_{i} \phi_i(\ab{x}) \in \mathcal{W}_{h,0}^2
 $,  
 $c_{i} \in  \R,$
 by solving the system of equations
\begin{equation*} 
  \int_{\overline{\Omega}}  \nabla\left(\triangle u_{h,0}(\ab{x}) \right) \circ \nabla \left( \triangle v_{h,0}(\ab{x})\right) \mathrm{d}\ab{x} = \int_{\overline{\Omega}} g(\ab{x}) v_{h,0}(\ab{x}) 
  \mathrm{d}\ab{x}-
  \int_{\overline{\Omega}}  \nabla\left(\triangle u_{h,g}(\ab{x}) \right) \circ \nabla \left( \triangle v_{h,0}(\ab{x})\right) \mathrm{d}\ab{x}
 \end{equation*}
for all $v_{h,0} \in  \W^2_{h,0}$.
This implies again a linear system  
$
\widetilde{S}\ab{c}=\widetilde{\ab{g}}
$ with 
$\widetilde{S} = (\widetilde{s}_{i,j} )_{i,j \in \mathcal{I}}$, $\widetilde{\ab{g}} = (\widetilde{g}_i)_{i \in \mathcal{I}}$,
for the unknown coefficients $\ab{c} = (c_{i})_{i \in \mathcal{I}}$, where 
\begin{equation*} \label{eq:sijT} 
 \widetilde{s}_{i,j}=\int_{\overline{\Omega}} \nabla\left(\triangle \phi_{i}(\ab{x}) \right) \circ \nabla \left(\triangle \phi_{j}(\ab{x}) \right) \mathrm{d}\ab{x}, \;\;  
 \widetilde{g}_{i}= \int_{\overline{\Omega}} g(\ab{x}) \phi_{i}(\ab{x}) - 
 \int_{\overline{\Omega}} \nabla\left(\triangle u_{h,g}(\ab{x}) \right) \circ \nabla \left(\triangle \phi_{i}(\ab{x}) \right) \mathrm{d}\ab{x}.
 \end{equation*} 
Setting again
$  
\widetilde{s}_{i,j} = \sum_{\ell \in \mathcal{I}_{\Omega}} \widetilde{s}^{(\ell)}_{i,j},  \; \widetilde{g}_{i} = \sum_{\ell \in \mathcal{I}_{\Omega}} \widetilde{g}^{(\ell)}_{i}
$
and using the isogeometric formulation gives
\begin{align*}  
    \widetilde{s}^{(\ell)}_{i,j} & =  \int_{[0,1]^{2}} \nabla \left( \frac{1}{|\det J \ab{F}^{(\ell)}(\bb{\xi}^{})|}  \,
    \nabla \hspace{-0.08cm}\circ\hspace{-0.08cm} \left(  N^{(\ell)}(\bb{\xi}^{}) \, \nabla \left(\phi_i( \ab{F}^{(\ell)}(\bb{\xi}^{})) \right)\right) \right)
    \circ \nonumber \\
    & \; \left(   N^{(\ell)}(\bb{\xi}^{}) \,  \nabla \left( \frac{1}{|\det J \ab{F}^{(\ell)}(\bb{\xi}^{})|}  \,
    \nabla \hspace{-0.08cm} \circ \hspace{-0.08cm} \left( N^{(\ell)}(\bb{\xi}^{}) \,\nabla \left(\phi_j( \ab{F}^{(\ell)}(\bb{\xi}^{}))\right)\right) \right) \right)\mathrm{d}\bb{\xi}^{}, \nonumber\\[-1.1cm]
\end{align*}
and
\begin{align*}
\widetilde{g}^{(\ell)}_{i}  & = \int_{[0,1]^{2}} g(\ab{F}^{(\ell)}(\bb{\xi}^{})) \,\phi_i( \ab{F}^{(\ell)}(\bb{\xi}^{})) \,|\det J \ab{F}^{(\ell)}(\bb{\xi}^{})| \; 
\mathrm{d}\bb{\xi}^{} \\
& - \int_{[0,1]^{2}} \nabla \left( \frac{1}{|\det J \ab{F}^{(\ell)}(\bb{\xi}^{})|}  \,
    \nabla \hspace{-0.08cm}\circ\hspace{-0.08cm} \left(  N^{(\ell)}(\bb{\xi}^{}) \, \nabla \left(u_{h,g}( \ab{F}^{(\ell)}(\bb{\xi}^{})) \right)\right) \right)
    \circ \nonumber \\
    & \; \left(   N^{(\ell)}(\bb{\xi}^{}) \,  \nabla \left( \frac{1}{|\det J \ab{F}^{(\ell)}(\bb{\xi}^{})|}  \,
    \nabla \hspace{-0.08cm} \circ \hspace{-0.08cm} \left( N^{(\ell)}(\bb{\xi}^{}) \,\nabla \left(\phi_i( \ab{F}^{(\ell)}(\bb{\xi}^{}))\right)\right) \right) \right)\mathrm{d}\bb{\xi}^{},
\end{align*}
cf. \cite{BaDe15, KaVi17a, KaVi19a}.

\section{Numerical examples} \label{section_numerical_examples_Galerkin}

We 
illustrate by performing various examples
the potential of our proposed $C^s$-smooth mixed degree and regularity isogeometric spline space to solve the biharmonic equation \eqref{eq:biharmonic_problem_Galerkin} and the triharmonic equation~\eqref{eq:triharmonic_problem} over two different bilinearly parameterized domains and one bilinear-like $G^2$ multi-patch parameterizations via the isogeometric Galerkin approach. 
For this purpose, 
{we investigate the convergence  under $h$-refinement}
using  $C^\sm$-smooth mixed degree and regularity isogeometric spline spaces~$\W_h^\sm$ 
based on  the underlying mixed degree and regularity spline spaces $\mathcal{S}_h^{(\ab{p}_1, \ab{2\sm}+\ab{1}),(\ab{p}_1-\ab{1},\ab{\sm})}([0,1]^2)$, $\sm=1,2$. 


In all examples below, the functions $g, g_1, g_2$ determining the right-side of the biharmonic equation \eqref{eq:biharmonic_problem_Galerkin} and the functions $g, g_1, g_2, g_3$ describing the right-side of the triharmonic equation \eqref{eq:triharmonic_problem} shall be computed from the exact solution 
\begin{equation}  \label{eq:exactSolution}
 u(x_1,x_2)= \cos\left({x_1}\right) \sin\left({x_2}\right). 
\end{equation}
The obtained approximants $u_h \in \mathcal{W}^\sm_h$ will be compared with the exact solution \eqref{eq:exactSolution} by computing the relative errors with respect to the $L^2$-norm, $H^1$-seminorm and equivalents (cf. \cite{BaDe15})
of the $H^2$-seminorm and $H^3$-seminorm (in the case $s=2$), i.e.
\begin{equation} \label{eq:eqiuv2seminorms}
\frac{\| u-u_h\|_{L^2}}{\| u \|_{L^2}}, \quad
\frac{| u-u_h|_{H^1}}{| u |_{H^1}}, \quad
 \frac{\| \Delta u- \Delta u_h\|_{L^2}}{\| \Delta u \|_{L^2}}, \quad 
\frac{ \| \nabla (\Delta u) - \nabla(\Delta u_h)\|_{L^2}}{\| \nabla(\Delta u )\|_{L^2}}.
\end{equation}
We will call in the following examples the equivalents of the $H^m$-seminorms simply $H^m$-seminorms.


We now consider the three aforementioned planar 
multi-patch domains. The first one is the bilinear three-patch domain~$\overline{\Omega}$ shown in Fig.~\ref{fig:domains} (left), where the parameterizations of the individual patches $\overline{\Omega^{(i)}}$, $i=0,1,2$, are given as
\begin{equation*}  \label{eq:five_patch_domain_param}
\ab{F}^{(i)}(\xi_1,\xi_2) = \ab{\Xi}_0 (1-\xi_1) (1-\xi_2) + \ab{\Xi}_{2i+1} \,(1-\xi_1) \xi_2  + \ab{\Xi}_{2i+2} \,\xi_1 \xi_2 + \ab{\Xi}_{2i+3} \,\xi_1 (1-\xi_2),
\end{equation*}
with
\begin{equation} \label{eq:verticesThreePatch}
\begin{split}
\quad \ab{\Xi}_0  & = \begin{pmatrix}2 \\ 13/10 \end{pmatrix}, \; \ab{\Xi}_1 = \begin{pmatrix}21/10 \\ 7/10 \end{pmatrix},  \;\ab{\Xi}_2 = \begin{pmatrix}12/5 \\ 9/10 \end{pmatrix},  \;\ab{\Xi}_3 = \begin{pmatrix}5/2 \\ 8/5  \end{pmatrix},  
\\
\ab{\Xi}_4  & = \begin{pmatrix}37/20 \\ 19/10 \end{pmatrix},  \;\ab{\Xi}_5 = \begin{pmatrix}27/20 \\ 7/5 \end{pmatrix},\;
\ab{\Xi}_6  = \begin{pmatrix}17/10 \\ 4/5 \end{pmatrix}, \;
\ab{\Xi}_{7} := \ab{\Xi}_1. 
\end{split}
\end{equation}
The second one is the bilinear five-patch domain~$\overline{\Omega}$ visualized in Fig.~\ref{fig:domains} (middle), where again
the parameterizations of the individual patches $\overline{\Omega^{(i)}}$, $i=0,\ldots,4$, are
\begin{equation*}  \label{eq:five_patch_domain_param}
\ab{F}^{(i)}(\xi_1,\xi_2) = \ab{\Xi}_0 (1-\xi_1) (1-\xi_2) + \ab{\Xi}_{2i+1} \,(1-\xi_1) \xi_2  + \ab{\Xi}_{2i+2} \,\xi_1 \xi_2 + \ab{\Xi}_{2i+3} \,\xi_1 (1-\xi_2),
\end{equation*} 
with
\begin{align}  \label{eq:verticesFivePatch}
\ab{\Xi}_0 & = \begin{pmatrix}7 \\ 6 \end{pmatrix}, \, \ab{\Xi}_1 = \begin{pmatrix}57/5 \\ 6 \end{pmatrix},  \,\ab{\Xi}_2 = \begin{pmatrix}113/10 \\ 33/4 \end{pmatrix},  \,\ab{\Xi}_3 = \begin{pmatrix}93/10 \\ 199/20 \end{pmatrix},  \nonumber \\
\ab{\Xi}_4 & = \begin{pmatrix}31/5 \\ 45/4  \end{pmatrix},  \,\ab{\Xi}_5 = \begin{pmatrix}37/10 \\ 181/20 \end{pmatrix},\,
\ab{\Xi}_6  = \begin{pmatrix}2 \\ 127/20 \end{pmatrix}, \, \ab{\Xi}_7 = \begin{pmatrix}17/5 \\ 7/2 \end{pmatrix},  \\
\ab{\Xi}_8 & = \begin{pmatrix}29/5 \\ 27/20 \end{pmatrix},  \,\ab{\Xi}_9 = \begin{pmatrix}89/10 \\ 37/20 \end{pmatrix},  \,\ab{\Xi}_{10} = \begin{pmatrix}219/20 \\ 63/20  \end{pmatrix}, \; \ab{\Xi}_{11} := \ab{\Xi}_1. \nonumber 
\end{align}
The third domain is the bilinear-like $G^2$ three-patch domain~$\overline{\Omega}$ presented in Fig.~\ref{fig:domains} (right), where the individual patches are parameterized by bi-cubic geometry mappings  
from the space $\mathcal{S}_{1/4}^{\ab{3}, \ab{2}}([0,1]^2) \times \mathcal{S}_{1/4}^{\ab{3}, \ab{2}}([0,1]^2)$ as 
\begin{equation}  \label{eq:three_patch_domain_bilinearLike}
 \ab{F}^{(i)} (\ab{\xi}) = \sum_{j_1=0}^{6} \sum_{j_2=0}^{6} \ab{c}^{(i)}_{j_1,j_2} N^{\ab{3},\ab{2}}_{j_1,j_2}(\ab{\xi}),
\end{equation}
{with control points~$\ab{c}^{(i)}_{j_1,j_2}$ specified in}
Table~\ref{three_patch_domain_bilinearLikeTable}.
\begin{table}[htb!]
\normalsize
\centering
\setlength{\tabcolsep}{0.30em}
\begin{tabular}{|ccccccc|}
\hline
\multicolumn{7}{|c|}{$\f{c}_{j_1,j_2}^{(1)}$} \\
\hline
& & & & & & \\[-0.35cm]
$(\frac{119}{15} , \frac{14}{5})$ & 
$( \frac{5971}{720} ,\frac{169}{60})$ & 
$( \frac{721}{80} , \frac{57}{20} )$ & 
$( \frac{1211}{120} , \frac{29}{10} )$ & 
$( \frac{2681}{240} , \frac{59}{20} )$ & 
$( \frac{8561}{720} , \frac{179}{60} )$ & 
$( \frac{49}{4} , 3 )$ \\[0.1cm]
$( \frac{707}{90} , \frac{77}{30} )$ & 
$( \frac{71141}{8640} , \frac{1859}{720} )$ & 
$( \frac{2877}{320} , \frac{209}{80} )$ & 
$( \frac{14581}{1440} , \frac{319}{120} )$ & 
$( \frac{32431}{2880} , \frac{649}{240} )$ & 
$( \frac{103831}{8640} , \frac{1969}{720} )$ & 
$( \frac{595}{48} , \frac{11}{4} )$  \\[0.1cm]
$( \frac{77}{10} , \frac{21}{10} )$ & 
$(  \frac{2597}{320} , \frac{169}{80} )$ & 
$(  \frac{2863}{320} , \frac{171}{80} )$ & 
$(  \frac{1631}{160} , \frac{87}{40} )$ & 
$(  \frac{3661}{320} , \frac{177}{80} )$ & 
$(  \frac{3927}{320} , \frac{179}{80} )$ & 
$(  \frac{203}{16} , \frac{9}{4} )$ \\[0.1cm]
$(\frac{112}{15} , \frac{7}{5} )$ & 
$( \frac{11431}{1440} , \frac{169}{120} )$ & 
$( \frac{1421}{160} , \frac{57}{40} )$ & 
$( \frac{259}{25} , \frac{63}{50} )$ & 
$( \frac{294}{25} , \frac{63}{50} )$ & 
$( \frac{322}{25} , \frac{77}{50} )$ & 
$( \frac{336}{25} , \frac{847}{500} )$ \\[0.1cm]
$( \frac{217}{30} , \frac{7}{10} )$ &
$( \frac{22351}{2880} , \frac{169}{240} )$ &
$( \frac{2821}{320} , \frac{57}{80} )$ &
$( \frac{259}{25} , \frac{7}{25} )$ &
$( \frac{609}{50} , \frac{7}{50} )$ &
$( \frac{336}{25} , \frac{7}{10} )$ &
$( \frac{1757}{125} , \frac{91}{100} )$ \\[0.1cm]
$( \frac{637}{90} , \frac{7}{30} )$ &
$( \frac{66031}{8640} , \frac{169}{720} )$ &
$( \frac{2807}{320} , \frac{19}{80} )$ &
$( \frac{21}{2} , -\frac{14}{25} )$ &
$( \frac{609}{50} , -\frac{49}{50} )$ &
$( \frac{336}{25} , -\frac{14}{25} )$ &
$( \frac{1407}{100} , -\frac{1}{10} )$ \\[0.1cm]
$( 7 , 0 )$ &
$( \frac{91}{12} , 0 )$ &
$( \frac{35}{4} , 0 )$ &
$( \frac{21}{2} , -\frac{112}{125} )$ &
$( \frac{49}{4} , -\frac{77}{50} )$ &
$( \frac{161}{12} , -\frac{49}{45} )$ &
$( 14 , -\frac{7}{10})$ \\[0.1cm]
\hline
\hline 
\multicolumn{7}{|c|}{$\f{c}_{j_1,j_2}^{(2)}$} \\
\hline
& & & & & & \\[-0.35cm]
$( \frac{119}{15} , \frac{14}{5} )$ &
$( \frac{70}{9} , \frac{91}{30} )$ &
$( \frac{112}{15} , \frac{7}{2} )$ &
$( 7 ,\frac{21}{5} )$ &
$( \frac{98}{15} , \frac{49}{10} )$ &
$( \frac{56}{9} , \frac{161}{30} )$ &
$( \frac{91}{15} , \frac{28}{5} )$  \\[0.1cm]
$(\frac{5971}{720} , \frac{169}{60} )$ &
$( \frac{70231}{8640} , \frac{2209}{720} )$ &
$( \frac{22463}{2880} , \frac{857}{240} )$ &
$( \frac{1169}{160} , \frac{173}{40} )$ &
$( \frac{19621}{2880} , \frac{1219}{240} )$ &
$( \frac{56021}{8640} , \frac{4019}{720} )$ &
$( \frac{455}{72} , \frac{35}{6} )$  \\[0.1cm]
$(\frac{721}{80} , \frac{57}{20} )$ &
$( \frac{8477}{960} , \frac{251}{80} )$ &
$( \frac{2709}{320} , \frac{297}{80} )$ &
$( \frac{1267}{160} , \frac{183}{40} )$ &
$( \frac{2359}{320} , \frac{87}{16} )$ &
$( \frac{6727}{960} , \frac{481}{80} )$ &
$( \frac{273}{40} , \frac{63}{10} )$  \\[0.1cm]
$(\frac{1211}{120} , \frac{29}{10} )$ &
$( \frac{14231}{1440} , \frac{389}{120} )$ &
$( \frac{4543}{480} , \frac{157}{40} )$ &
$( \frac{441}{50} , \frac{126}{25} )$ &
$( \frac{203}{25} , \frac{154}{25} )$ &
$( \frac{189}{25} , \frac{343}{50} )$ &
$( \frac{3661}{500} , \frac{3661}{500} )$ \\[0.1cm]
$(\frac{2681}{240} , \frac{59}{20} )$ &
$( \frac{31493}{2880} , \frac{803}{240} )$ &
$( \frac{2009}{192} , \frac{331}{80} )$ &
$( \frac{497}{50} , \frac{273}{50} )$ &
$( \frac{231}{25} , 7 )$ &
$( \frac{413}{50} , \frac{196}{25} )$ &
$( \frac{994}{125} , \frac{4137}{500} )$ \\[0.1cm]
$( \frac{8561}{720} , \frac{179}{60} )$ &
$( \frac{100541}{8640} , \frac{2459}{720} )$ &
$( \frac{32053}{2880} , \frac{1027}{240} )$ &
$( \frac{266}{25} , \frac{287}{50} )$ &
$( \frac{511}{50} , \frac{371}{50} )$ &
$( \frac{231}{25} , \frac{413}{50} )$ &
$( \frac{217}{25} , \frac{259}{30} )$ \\[0.1cm]
$( \frac{49}{4} , 3 )$ &
$( \frac{959}{80} , \frac{69}{20} )$ &
$( \frac{917}{80} , \frac{87}{20} )$ &
$( \frac{5593}{500} , \frac{301}{50} )$ &
$( \frac{5341}{500} , \frac{763}{100} )$ &
$( \frac{777}{80} , \frac{101}{12} )$ &
$( \frac{91}{10} , \frac{35}{4} )$ \\[0.1cm]
\hline
\multicolumn{7}{|c|}{$\f{c}_{j_1,j_2}^{(3)}$} \\
\hline
& & & & & & \\[-0.35cm]
$( \frac{119}{15} , \frac{14}{5} )$ &
$( \frac{707}{90} , \frac{77}{30} )$ &
$( \frac{77}{10} , \frac{21}{10} )$ &
$( \frac{112}{15} , \frac{7}{5} )$ &
$( \frac{217}{30} , \frac{7}{10} )$ &
$( \frac{637}{90} , \frac{7}{30} )$ &
$( 7 , 0 )$ \\[0.1cm]
$(\frac{70}{9} , \frac{91}{30} )$ &
$( \frac{3311}{432} , \frac{1001}{360} )$ &
$( \frac{119}{16} , \frac{91}{40} )$ &
$( \frac{511}{72} , \frac{91}{60} )$ &
$( \frac{973}{144} , \frac{91}{120} )$ &
$( \frac{2821}{432} , \frac{91}{360} )$ &
$( \frac{77}{12} , 0  )$ \\[0.1cm]
$( \frac{112}{15} , \frac{7}{2} )$ &
$( \frac{5243}{720} , \frac{77}{24} )$ &
$( \frac{553}{80} , \frac{21}{8} )$ &
$( \frac{763}{120} , \frac{7}{4} )$ &
$( \frac{1393}{240} , \frac{7}{8} )$ &
$( \frac{3913}{720} , \frac{7}{24} )$ &
$( \frac{21}{4} , 0 )$ \\[0.1cm]
$( 7 , \frac{21}{5} )$ &
$( \frac{161}{24} , \frac{77}{20} )$ &
$( \frac{49}{8} , \frac{63}{20} )$ &
$( \frac{133}{25} , \frac{21}{10} )$ &
$( \frac{231}{50} , \frac{28}{25} )$ &
$( \frac{21}{5} , \frac{7}{50} )$ &
$( \frac{413}{100} , -\frac{21}{100} )$ \\[0.1cm]
$( \frac{98}{15} , \frac{49}{10} )$ &
$( \frac{4417}{720} , \frac{539}{120} )$ &
$( \frac{427}{80} , \frac{147}{40} )$ &
$( \frac{21}{5} , \frac{14}{5} )$ &
$( \frac{84}{25} , \frac{91}{50} )$ &
$( \frac{77}{25} , \frac{14}{25} )$ &
$( \frac{301}{100} , -\frac{7}{250} )$ \\[0.1cm]
$( \frac{56}{9} , \frac{161}{30} )$ &
$( \frac{2485}{432} , \frac{1771}{360} )$ &
$( \frac{77}{16} , \frac{161}{40} )$ &
$( \frac{7}{2} , \frac{7}{2} )$ &
$( \frac{119}{50} , \frac{147}{50} )$ &
$( \frac{21}{10} , \frac{77}{50} )$ &
$( \frac{119}{60} , \frac{21}{25} )$ \\[0.1cm]
$(\frac{91}{15} , \frac{28}{5} )$ &
$( \frac{1001}{180} , \frac{77}{15} )$ &
$( \frac{91}{20} , \frac{21}{5} )$ &
$( \frac{392}{125} , \frac{987}{250} )$ &
$( \frac{987}{500} , \frac{1729}{500} )$ &
$( \frac{14}{9} , \frac{133}{60} )$ &
$( \frac{7}{5} , \frac{7}{5}  )$ \\[0.1cm]
\hline
\end{tabular}
\caption{Control points~$\ab{c}_{j_1,j_2}^{(i)}$, $i=1,2,3$, of the bicubic bilinear-like $G^2$ three-patch spline geometry \eqref{eq:three_patch_domain_bilinearLike} shown in Fig.~\ref{fig:domains} (right).}
\label{three_patch_domain_bilinearLikeTable} 
\end{table}

\begin{figure}[htb!]
    \centering
    \includegraphics[scale=0.22]{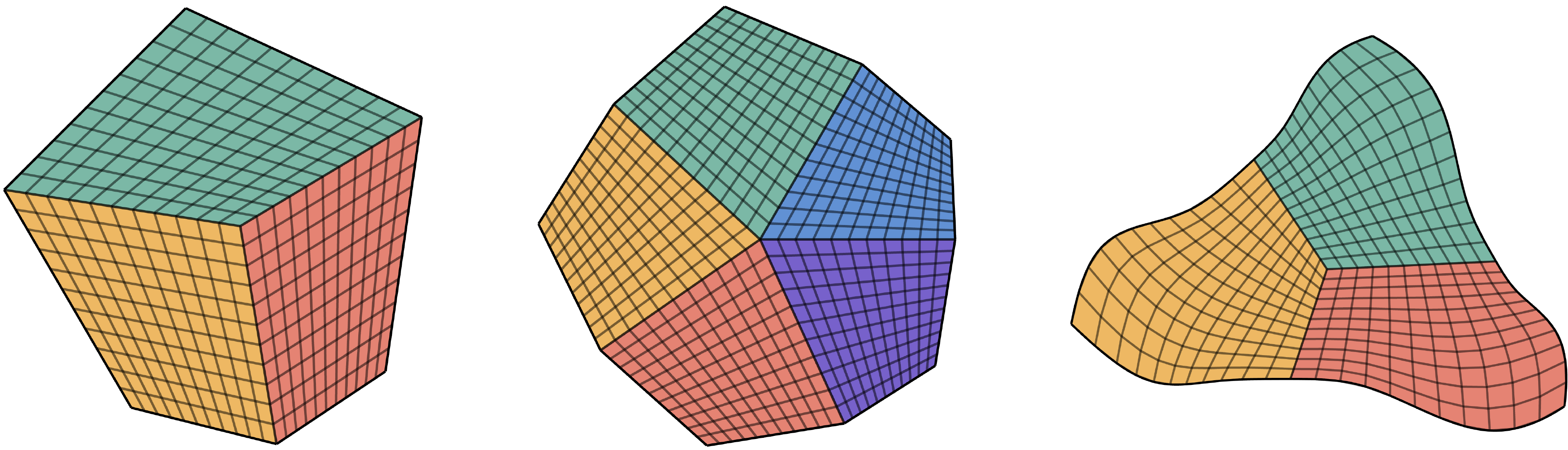}
\caption{Plots of the bilinear three-patch domain \eqref{eq:verticesThreePatch}, left, of the bilinear five-patch domain \eqref{eq:verticesFivePatch}, middle, and of the bilinear-like $G^2$ three-patch domain \eqref{eq:three_patch_domain_bilinearLike}, right.}
    \label{fig:domains}
\end{figure}

While in the first two examples we use $C^s$-smooth discretization spaces~$\mathcal{W}^s_h$ based on the mixed degree underlying spline spaces $\mathcal{S}_h^{(\ab{\sm}+\ab{1}, \ab{2\sm}+\ab{1}),(\ab{\sm},\ab{\sm})}([0,1]^2)$, $\sm=1,2$ (i.e. Case A), in the remaining two examples $C^s$-smooth discretization spaces~$\mathcal{W}^s_h$ based on the mixed regularity underlying spaces $\mathcal{S}_h^{(\ab{2\sm}+\ab{1}, \ab{2\sm}+\ab{1}),(\ab{2\sm},\ab{\sm})}([0,1]^2)$, $\sm=1,2$ (i.e. Case B) will be considered. 
Note that in all examples below, the number of degrees of freedom involved is much lower in comparison to using $C^s$-smooth discretization spaces~$\mathcal{W}^s_h$ based on the usual underlying spline spaces $\mathcal{S}_h^{(\ab{2\sm}+\ab{1}, \ab{2\sm}+\ab{1}),(\ab{\sm},\ab{\sm})}([0,1]^2)$, $s=1,2$, with the same degree and regularity everywhere. 

\begin{ex} \label{ex:ThreeFivepatchdomainBiharmonic}
We solve the biharmonic equation \eqref{eq:biharmonic_problem_Galerkin} 
over the three multi-patch domains presented in Fig.~\ref{fig:domains}. For this, we use $C^1$-smooth discretization spaces~$\mathcal{W}^1_h$ 
which are based on the underlying mixed degree spline spaces $\mathcal{S}_h^{(\ab{2}, \ab{3}),(\ab{1},\ab{1})}([0,1]^2)$
with mesh sizes $h= h_{0}/2^j$, $j=0,\ldots,3$, and $h_{0}=1/6$. 
Since the parameterizations $\ab{F}^{(i)}$ of the individual patches $\Omega^{(i)}$ should fulfill the relation
$
\ab{F}^{(i)} \in \mathcal{S}_{h_0}^{\ab{2}, \ab{1}}([0,1]^2) \times \mathcal{S}_{h_0}^{\ab{2}, \ab{1}}([0,1]^2),
$
we have to slightly modify the bilinear-like $G^2$ three-patch domain \eqref{eq:three_patch_domain_bilinearLike} in such a way that the individual patches are described by bi-quadratic geometry mappings  
from the space $\mathcal{S}_{1/6}^{\ab{2}, \ab{1}}([0,1]^2) \times \mathcal{S}_{1/6}^{\ab{2}, \ab{1}}([0,1]^2)$, which results technically in just a bilinear-like $G^1$ three-patch domain, see Fig.~\ref{fig:galerkin_biharmonic} (top, right), 
but nevertheless sufficient for solving the biharmonic equation with $C^1$-smooth functions.
We study the computed relative errors~\eqref{eq:eqiuv2seminorms} considering the $L^2$, $H^1$ and $H^2$-(semi)norms, and observe
for all three domains a convergence of order~$\mathcal{O}(h^{p_1}) = \mathcal{O}(h^{2}) $ for the $L^2$ and $H^1$-(semi)norms, 
{while studying the $H^2$-seminorm an order of $\mathcal{O}(h^{p_1-1}) = \mathcal{O}(h^{1})$ is obtained}, cf. Fig.~\ref{fig:galerkin_biharmonic} (below).
\begin{figure}[htb!]
    \centering
    \includegraphics[scale=0.218]{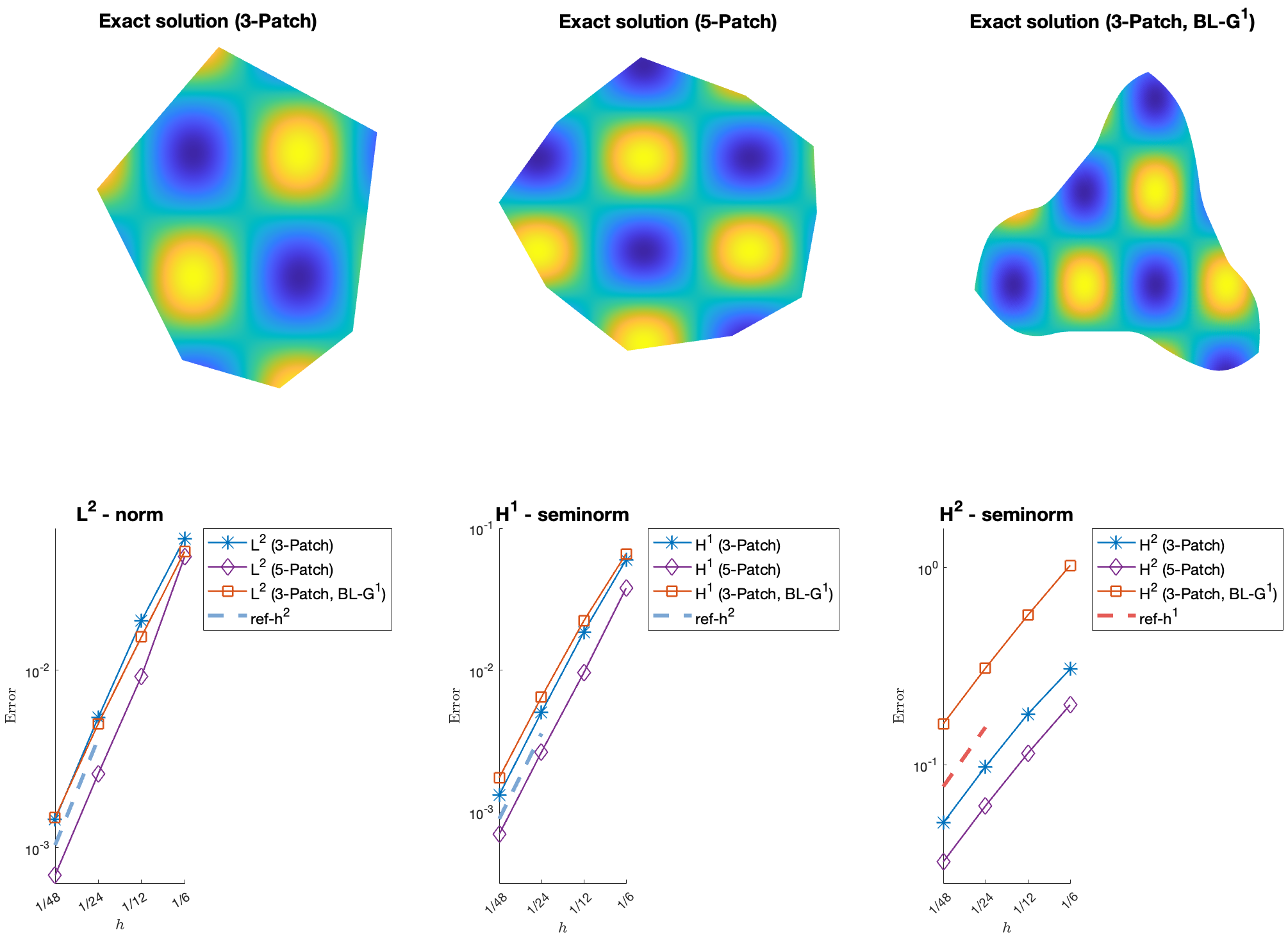}

\caption{Example~\ref{ex:ThreeFivepatchdomainBiharmonic}. Plots of the considered exact solution~\eqref{eq:exactSolution} over the bilinear three-patch domain (top, left), the bilinear five-patch domain (top, middle) and the bilinear-like $G^1$ three-patch domain (top, right), as well as plots of the relative errors~\eqref{eq:eqiuv2seminorms} computed with respect to the $L^2$-norm and with respect to the $H^1$ and $H^2$-seminorm (below, left to right) for solving the biharmonic equation~\eqref{eq:biharmonic_problem_Galerkin} using the $C^1$-smooth discretization spaces~$\mathcal{W}^1_h$ based on the underlying mixed degree spline space $\mathcal{S}_h^{(\ab{2}, \ab{3}),(\ab{1},\ab{1})}([0,1]^2)$.}
    \label{fig:galerkin_biharmonic}
\end{figure}
\end{ex}

\begin{ex} \label{ex:ThreeFivepatchdomainTriharmonic}
In this example, we solve the triharmonic equation \eqref{eq:triharmonic_problem} 
over two of the multi-patch domains given in Fig.~\ref{fig:domains}, by choosing $C^2$-smooth discretization spaces~$\mathcal{W}^2_h$  which are based on the underlying mixed degree spline spaces $\mathcal{S}_h^{(\ab{3}, \ab{5}),(\ab{2},\ab{2})}([0,1]^2)$
with mesh sizes $h= h_{0}/2^j$, $j=0,\ldots,3$, and $h_{0}=1/5$.  Thereby, the first considered domain is the  bilinear five-patch domain \eqref{eq:verticesFivePatch}  
given in Fig.~\ref{fig:domains} (middle), while the second one is again an approximation of the bilinear-like $G^2$ three-patch domain \eqref{eq:three_patch_domain_bilinearLike}, now such that 
$
\ab{F}^{(i)} \in \mathcal{S}_{h_0}^{\ab{3}, \ab{2}}([0,1]^2) \times \mathcal{S}_{h_0}^{\ab{3}, \ab{2}}([0,1]^2),
$
cf.~Fig.~\ref{fig:galerkin_triharmonic} (top, middle).
By comparing the resulting relative errors~\eqref{eq:eqiuv2seminorms},  the numerical results show for both domains convergence rates of order $\mathcal{O}(h^{p_1-1}) = \mathcal{O}(h^{2}) $ for the $L^2$, $H^1$ and $H^2$-(semi)norm and of order $\mathcal{O}(h^{p_1-2}) = \mathcal{O}(h^{1})$ for the $H^3$-seminorm, cf. Fig.~\ref{fig:galerkin_triharmonic}. 
\begin{figure}[htb!]
    \centering
    \includegraphics[scale=0.218]{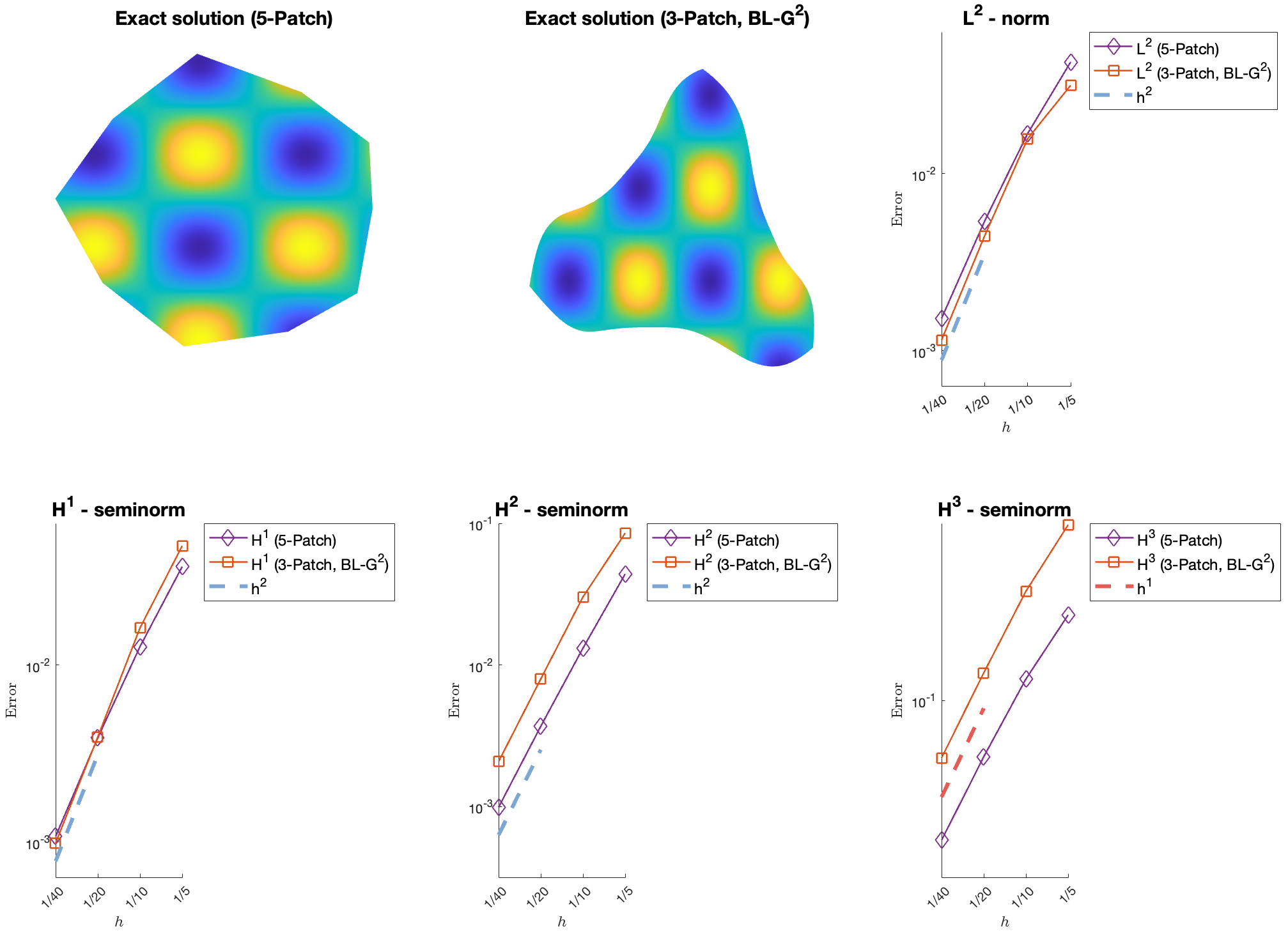}
\caption{Example~\ref{ex:ThreeFivepatchdomainTriharmonic}. Plots of the considered exact solution~\eqref{eq:exactSolution} over the bilinear five-patch domain (top, left) and of the bilinear-like $G^2$ three-patch domain (top, middle), as well as plots of the relative errors~\eqref{eq:eqiuv2seminorms} computed with respect to the $L^2$-norm (top, right) and with respect to the $H^1$,$H^2$ and $H^3$-seminorm (below, left to right) for solving the triharmonic equation~\eqref{eq:triharmonic_problem} using the $C^2$-smooth discretization spaces~$\mathcal{W}^2_h$ based on the underlying mixed degree spline space $\mathcal{S}_h^{(\ab{3}, \ab{5}),(\ab{2},\ab{2})}([0,1]^2)$.}
    \label{fig:galerkin_triharmonic}
\end{figure} 
\end{ex}

Note that for both cases, i.e. for solving the biharmonic and the triharmonic equation by employing $C^s$-smooth mixed degree isogeometric spline spaces with $p_1=\sm+1$ and $\sm=1,2$, the obtained orders are the expected ones for isogeometric  spline spaces of degree $p_1=\sm+1$, $\sm=1,2$. Note also that some of the orders could be increased by using the weak Galerkin approach instead of the standard one, cf. \cite{WangYe2013, ZhangJun2015, ZhangYeLin2019}.

\begin{ex} \label{ex:ThreeFivepatchdomainBiharmonicCase2}
Let us again solve the biharmonic equation \eqref{eq:biharmonic_problem_Galerkin} 
over the three multi-patch domains presented in Fig.~\ref{fig:domains}.   
Now, the underlying spline spaces for the $C^1$-smooth discretization spaces~$\mathcal{W}^1_h$ are the mixed regularity spline spaces $\mathcal{S}_h^{(\ab{3}, \ab{3}),(\ab{2},\ab{1})}([0,1]^2)$ with mesh sizes $h= h_{0}/2^j$, $j=0,\ldots,3$, and $h_{0}=1/4$. 
By studying the numerical results, we observe  for all three domains convergence rates of optimal order $\mathcal{O}(h^{p_1+1}) = \mathcal{O}(h^{4}) $, $\mathcal{O}(h^{p_1}) = \mathcal{O}(h^{3}) $ and $\mathcal{O}(h^{p_1-1}) = \mathcal{O}(h^{2}) $ for the $L^2$, $H^1$ and $H^2$-(semi)norms, respectively. cf. Fig.~\ref{fig:galerkin_biharmonic_Case2} (below). 
\begin{figure}[htb!]
    \centering
    \includegraphics[scale=0.218]{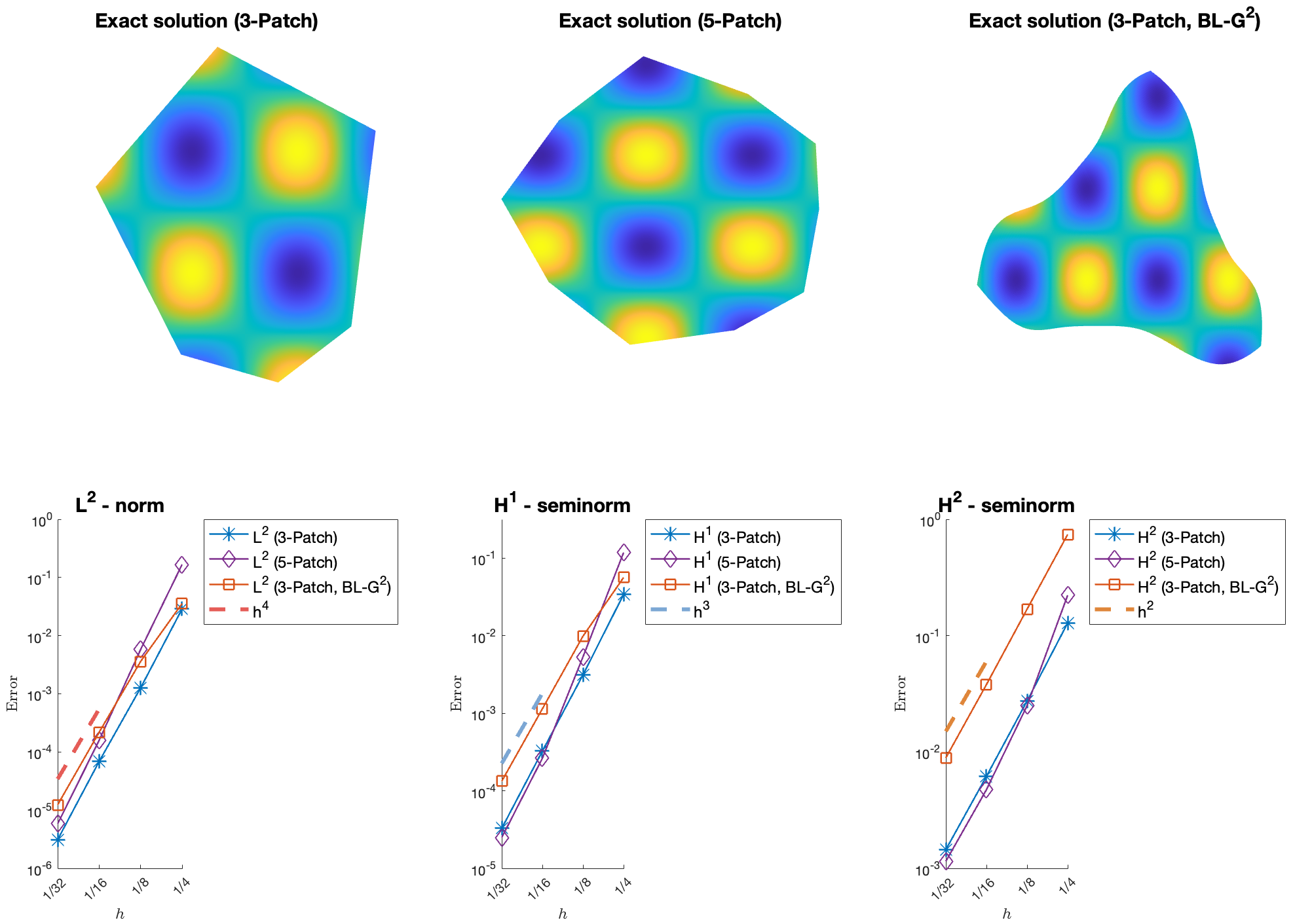}\caption{Example~\ref{ex:ThreeFivepatchdomainBiharmonicCase2}.
    Plots of the bilinear three-patch domain (top, left), of the bilinear five-patch domain (top, middle) and of the bilinear-like $G^2$ three-patch domain (top, right) with the considered exact solution~\eqref{eq:exactSolution}, as well as plots of the relative errors~\eqref{eq:eqiuv2seminorms} computed with respect to the $L^2$-norm and with respect to the $H^1$ and $H^2$-seminorm (below, left to right) for solving the biharmonic equation~\eqref{eq:biharmonic_problem_Galerkin} using the $C^1$-smooth discretization spaces~$\mathcal{W}^1_h$ based on the underlying mixed regularity spline space $\mathcal{S}_h^{(\ab{3}, \ab{3}),(\ab{2},\ab{1})}([0,1]^2)$. }
  \label{fig:galerkin_biharmonic_Case2}
\end{figure} 
\end{ex}

\begin{ex} \label{ex:ThreeFivepatchdomainTriharmonicCase2}
In the last example, we solve again the triharmonic equation \eqref{eq:triharmonic_problem}, now by choosing $C^2$-smooth discretization spaces~$\mathcal{W}^2_h$ based on underlying 
mixed regularity spline spaces $\mathcal{S}_h^{(\ab{5}, \ab{5}),(\ab{4},\ab{2})}([0,1]^2)$  with mesh sizes $h= h_{0}/2^j$, $j=0,\ldots,3$, and $h_{0}=1/5$. We consider on the one hand the bilinear five-patch domain \eqref{eq:verticesFivePatch} presented in Fig.~\ref{fig:domains}, and on the other hand the bilinear-like $G^2$ three-patch domain, which is an approximation of the domain \eqref{eq:three_patch_domain_bilinearLike}
in such a way that the individual patches are described by bi-quintic geometry mappings 
from the space $\mathcal{S}_{h_0}^{\ab{5}, \ab{4}}([0,1]^2) \times \mathcal{S}_{h_0}^{\ab{5}, \ab{4}}([0,1]^2)$, see Fig.~\ref{fig:galerkin_triharmonic_Case2} (top, middle).
The numerical results indicate for both domains convergence rates of optimal order $\mathcal{O}(h^{p_1+1}) = \mathcal{O}(h^{6}) $, $\mathcal{O}(h^{p_1}) = \mathcal{O}(h^{5}) $, $\mathcal{O}(h^{p_1-1}) = \mathcal{O}(h^{4}) $ and $\mathcal{O}(h^{p_1-2}) = \mathcal{O}(h^{3}) $ for the $L^2$, $H^1$, $H^2$ and $H^3$-(semi)norm, respectively. cf. Fig.~\ref{fig:galerkin_triharmonic_Case2}. 
\begin{figure}[htb!]
    \centering
    \includegraphics[scale=0.218]{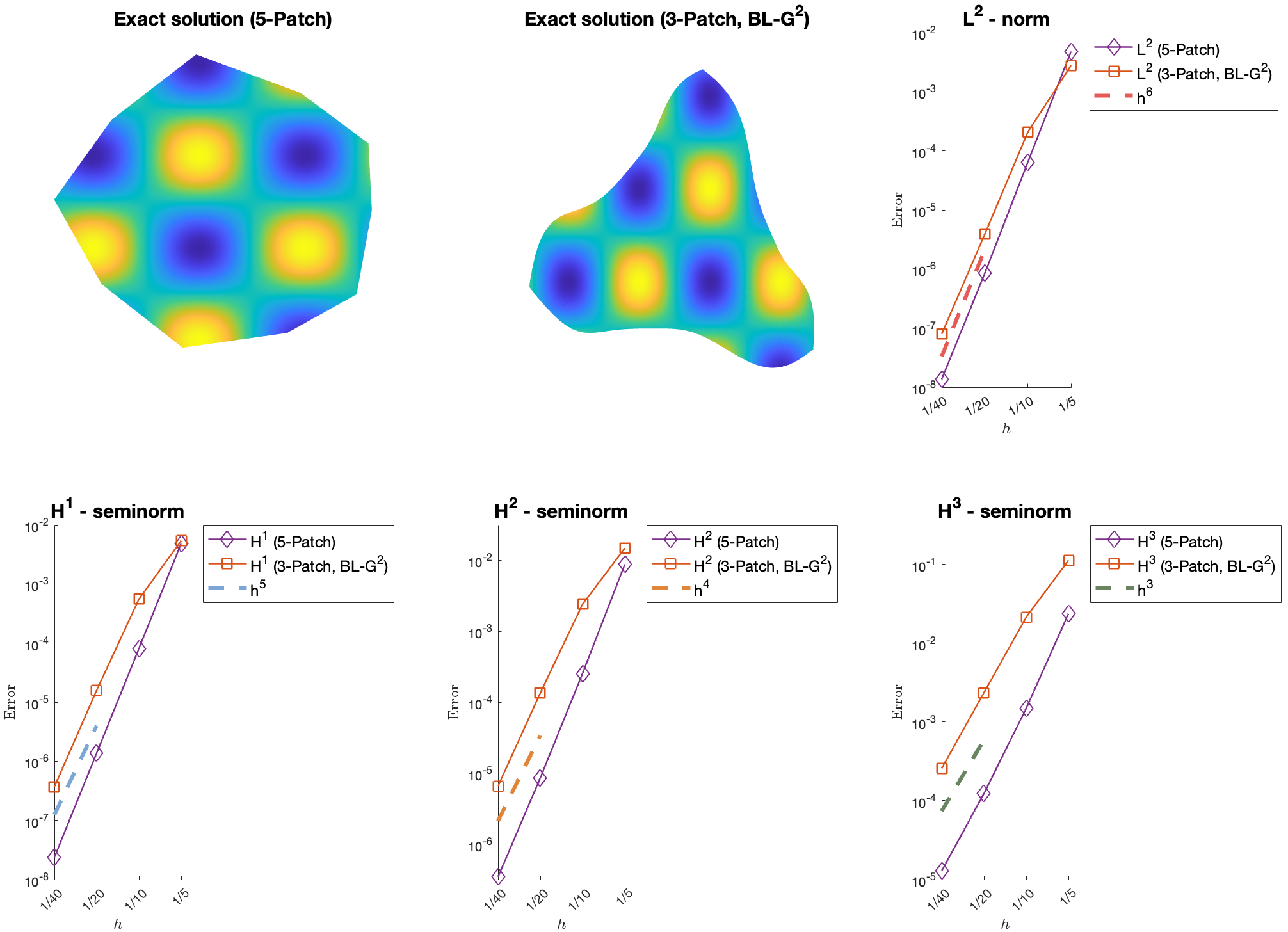}\caption{Example~\ref{ex:ThreeFivepatchdomainTriharmonicCase2}. Plots of the bilinear five-patch domain (top, left) and of the bilinear-like $G^2$ three-patch domain (top, middle) with the considered exact solution~\eqref{eq:exactSolution}, as well as plots of the relative errors~\eqref{eq:eqiuv2seminorms} computed
    with respect to the $L^2$-norm (top, right) and with respect to the $H^1$,$H^2$ and $H^3$-seminorm (below, left to right) for solving the triharmonic equation~\eqref{eq:triharmonic_problem} using the $C^2$-smooth discretization spaces~$\mathcal{W}^2_h$ based on the underlying mixed regularity spline space $\mathcal{S}_h^{(\ab{5}, \ab{5}),(\ab{4},\ab{2})}([0,1]^2)$. }
\label{fig:galerkin_triharmonic_Case2}
\end{figure} 
\end{ex}

\section{Conclusion} \label{sec:Conclusion}

We presented the 
design
of $C^s$-smooth mixed degree and regularity isogeometric spline spaces over planar bilinearly parameterized multi-patch geometries, 
and expanded the construction, providing an example, to the richer class of bilinear-like $G^s$ multi-patch domains, allowing the modelling of complex geometries containing curved edges.
The generated $C^s$-smooth mixed degree and regularity spline spaces possess the spline degree $p=2s+1$ and regularity $r=\sm$ in a small neighborhood around the edges and vertices of the multi-patch domain, and can have a reduced degree and/or an increased regularity in the interior of the patches. The design of these $C^s$-smooth spline spaces is based on the use of a novel underlying mixed degree and regularity spline space defined on the unit square~$[0,1]^2$. Some properties of  the introduced underlying mixed degree and regularity spline space are also presented.  

We studied several numerical examples for the two most practical and interesting cases of the $C^s$-smooth mixed degree and regularity isogeometric spline space, namely on the one hand the space with the same regularity everywhere, but with the smallest possible spline degree in the interior of the patches (called Case A), and on the other hand the space with the same spline degree  everywhere, but with the largest possible spline regularity for the interior of the patches (called Case B).
The biharmonic and the triharmonic equations are solved using the isogeometric Galerkin approach and the convergence of the results regarding the $L^2$-norm as well as (equivalents of) the $H^m$-seminorms, $1\leq m\leq \sm$, is studied.


A first research goal in future 
could be the application of $C^s$-smooth mixed degree and regularity isogeometric spline spaces to other methods like to isogeometric collocation. 
Furthermore, an extension to other problems such as Kirchhoff plate or Kirchhoff-Love shell problem as well as expanding the presented design to multi-patch surfaces and multi-patch volumes could also be an 
interesting research topic.

\paragraph*{\bf Acknowledgment}

M. Kapl has been partially supported by the Austrian Science Fund (FWF) through the project P~33023-N. V.~Vitrih has been partially supported by the 
Slovenian Research and Innovation Agency (research program P1-0404 and research projects N1-0296 and J1-4414). A.~Kosma\v c has been partially supported by the Slovenian Research and Innovation Agency (research program P1-0404, research project N1-0296 and Young Researchers Grant). This support is gratefully acknowledged.



\end{document}